\newtheorem{theorem}{Theorem}[section]
\newtheorem{lemma}[theorem]{Lemma}
\newtheorem{proposition}[theorem]{Proposition}
\theoremstyle{remark}
\newtheorem{remark}[theorem]{Remark}
\renewenvironment{proof}[1][Proof]{ {\itshape \noindent {#1.}} }{$\Box$
\medskip}
\numberwithin{equation}{section}
\newcommand{\R}{\mathbb{R}}
\newcommand{\Z}{\mathbb{Z}}
\newcommand{\Pb}{\mathbb{P}}
\newcommand{\E}{\mathbb{E}}
\newcommand{\G}{\mathcal{G}}
\newcommand{\C}{\mathcal{C}}
\newcommand{\V}{\mathbb{V}}
\def\les{\lesssim}
\begin{document}
\title{An Invariance Principle for Brownian Motion in Random Scenery}
\author{Yu Gu\thanks{Department of Applied Physics \& Applied
Mathematics, Columbia University, New York, NY 10027 (yg2254@columbia.edu; gb2030@columbia.edu)}  \and Guillaume Bal\footnotemark[1]}
\maketitle

\begin{abstract}
We prove an invariance principle for Brownian motion in Gaussian or Poissonian random scenery by the method of characteristic functions. Annealed asymptotic limits are derived in all dimensions, with a focus on the case of dimension $d=2$, which is the main new contribution of the paper.
\end{abstract}

\section{Introduction}

In this paper, we study the asymptotic distributions of random processes of the form $\int_0^{t}V(B_s)ds$, with $V$ some stationary random potential and $B_s, s\in [0,1]$ a standard Brownian motion independent of $V$.

The corresponding discrete version is the Kesten-Spitzer model of random walk in random scenery \cite{kesten1979limit} of the form $W_n=\sum_{i=1}^n\xi_{S_k}$. Here, $S_k=X_1+\ldots+X_k$ is a random walk on $\Z$ with i.i.d. increments and $\xi_n, n\in \Z$, are i.i.d. and independent of $X_i$. When $X_i$ and $\xi_i$ belong to the domain of attraction of certain stable laws, then after proper scaling $a(n)^{-1}W_{[nt]}$ converges weakly as $n\to \infty $ to a self-similar process with stationary increment. Non-stable limits may appear in that case. Assuming moreover that $\xi_i$ has zero mean and finite variance, it is shown in \cite{bolthausen1989central} that $(n\log n)^{-\frac{1}{2}}W_{[nt]}$ converges weakly to a Brownian motion when $d=2$. When $d\geq 3$, the argument contained in \cite{kesten1979limit} essentially proves that $n^{-\frac{1}{2}}W_{[nt]}$ converges weakly to a Brownian motion.

The continuous version $a(n)^{-1}\int_0^{nt}V(B_s)ds$ has been analyzed in \cite{remillard1991limit} for piecewise constant potentials given by $V(x)=\xi_{[x+U]}$, where $\xi_i$ are i.i.d. random variables with zero mean and finite variance, and $U$ is uniformly distributed in $[0,1)^d$ and independent of $\xi_i$. The results are similar to those obtained in the discrete setting. In \cite{kipnis1986central}, Kipnis-Varadhan proved central limit results in both the discrete and continuous settings for additive functionals of Markov processes. For the special case of Brownian motion in random scenery and by adapting the point of view of "medium seen from an observer", their results can be applied to prove invariance principle for the most general class of $V(x)$ when $d\geq 3$, including the ones analyzed in \cite{remillard1991limit}. For more relevant results and backgrounds, see \cite{komorowski2012fluctuations} and the references therein.

In this paper, we consider two types of simple yet important potentials, namely the Gaussian and Poissonian potentials, and derive the asymptotic distributions of $a(n)^{-1}\int_0^{nt}V(B_s)ds$ in all dimensions by method of characteristic functions. Since \cite{pardoux2006homogenization} contains the results for $d=1$ while \cite{kipnis1986central} implies the results for $d\geq 3$, our main contribution is the case $d=2$. For Gaussian and Poissonian potentials, the method of characteristic functions offers a relatively simple proof, which we present  in all dimensions.

There are several physical motivations for studying functionals of the form $\int_0^tV(B_s)ds$. We mention two examples here. The first is the parabolic Anderson model $u_t=\frac{1}{2}\Delta u+V u$ with random potential $V$ and initial condition $f$. By Feynman-Kac formula, the solution can be written as $u(t,x)=\E_B^x\{f(B_t)\exp(\int_0^tV(B_s)ds)\}$, where $\E_B^x$ denotes the expectation with respect to the Brownian motion starting from $x$. It is clear that the large time behavior of $u(t,x)$ is affected by the asymptotics of Brownian functional $\exp(\int_0^tV(B_s)ds)$, see e.g. the applications in the context of homogenization \cite{lejay2001homogenization, pardoux2006homogenization, gu2013weak}. As a second example, if we look at the model of Brownian particle in Poissonian obstacle denoted by $V$, then the integral $\int_0^t V(B_s)ds$ measures the total trapping energy received by the particle up to time $t$ and $\exp(-\int_0^t V(B_s)ds)$ is used to define the Gibbs measure. For Brownian motion in Poissonian potential, many existing results are of large deviation type; see \cite{sznitman1998brownian} for a review of such results.

The rest of the paper is organized as follows. We first describe the assumptions on the potentials and state our main theorems in section \ref{sec:2}. We then prove the convergence of finite dimensional distributions and tightness results in section \ref{sec:3} for the non-degenerate case and section \ref{sec:4} for the degenerate case (when the power spectrum of the potential vanishes at the origin). We discuss possible applications and extensions of our results in section \ref{sec:5} and present some technical lemmas in an appendix.

Here are notations used throughout the paper. We write $a\les b$ when there exists a constant $C$ independent of $n$ such that $a\leq Cb$. $N(\mu,\sigma^2)$ denotes the normal random variable with mean $\mu$ and variance $\sigma^2$ and $q_t(x)$ is the density function of $N(0,t)$. We use $a\wedge b=\min(a,b)$ and $a\vee b=\max(a,b)$. For multidimensional integrations, $\prod_idx_i$ is abbreviated as $dx$.

\section{Problem setup and main results}
\label{sec:2}
The Gaussian and Poissonian potentials are denoted by $V_g(x)$ and $V_p(x)$, respectively, throughout the paper.

For the Gaussian case, we assume $V_g(x)$ is stationary with zero mean and the covariance function $R_g(x)=\E\{V_g(x+y)V_g(y)\}$ is continuous and compactly supported. The power spectrum $\hat{R}_g(\xi)=\int_{\R^d}R_g(x)e^{-i\xi x}dx$, and by Bochner's theorem $\hat{R}_g(0)=\int_{\R^d}R_g(x)dx\geq 0$.

For the Poissonian case, we assume
\begin{equation}
V_p(x)=\int_{\R^d} \phi(x-y)\omega(dy)-c_p,
\end{equation}
where $\omega(dx)$ is a Poissonian field in $\R^d$ with the $d-$dimensional Lebesgue measure as its intensity measure and $\phi$ is a continuous, compactly supported shape function such that $\int_{\R^d} \phi(x)dx=c_p$. It is straightforward to check that $V_p(x)$ is stationary and has zero mean, and its covariance function \begin{equation}
R_p(x)=\E\{V_p(x+y)V_p(y)\}=\int_{\R^d}\phi(x+z)\phi(z)dz
\end{equation}
is continuous and compactly supported as well. The power spectrum $\hat{R}_p(\xi)=\int_{\R^d}R_p(x)e^{-i\xi x}dx$ and since $\int_{\R^d}\phi(x)dx=c_p$, we have $\hat{R}_p(0)=c_p^2\geq 0$.

In the Poissonian case, the random field $V_p(x)$ is mixing in the following sense. For two Borel sets $A,B\subset \R^d$, let $\mathcal{F}_A$ and $\mathcal{F}_B$ denote the sub-$\sigma$ algebras generated by the field $V_p(x)$ for $x\in A$ and $x\in B$, respectively. Then there exists a positive and decreasing function $\varphi(r)$ such that
\begin{equation}
|Cor(\eta,\zeta)|\leq \varphi(2d(A,B))
\label{eq:mixingCondition}
\end{equation}
for all square integrable random variables $\eta$ and $\zeta$ that are $\mathcal{F}_A$ and $\mathcal{F}_B$ measurable, respectively. The multiplicative factor $2$ is only here for convenience. Actually, when $|x|$ is sufficiently large, $V_p(x+y)$ is independent of $V_p(y)$ and so the mixing coefficient $\varphi(r)$ can be chosen as a positive, decreasing function with compact support in $[0,\infty)$. We will use this in the estimation of the fourth moment of $V_p(x)$.

The following theorems are our main results.

\begin{theorem}
Let $B_t, t\geq 0$ be a $d-$dimensional standard Brownian motion independent of the stationary random potential $V(x)$, which is chosen to be either Gaussian or Poissonian, and $R(x)=\E\{V(x+y)V(y)\}$ be the covariance function, $\hat{R}(0)=\int_{\R^d}R(x)dx$. Define $X_n(t)=a(n)^{-1}\int_0^{nt} V(B_s)ds$ with the scaling factor
\begin{equation*}
a(n)=\left\{\begin{array}{ll}
n^{\frac{3}{4}} & d=1,\\
(n\log n)^{\frac{1}{2}}  & d=2,\\
n^{\frac{1}{2}} & d\geq3.
\end{array}\right.
\end{equation*}
Then we have that $X_n(t)$ converges weakly in $\C([0,1])$ to $\sigma_dZ_t$ with the following representations:

When $d=1$, then $Z_t=\int_{\R} L_t(x)W(dx)$, where $L_t(x)$ is the local time of $B_t$ and $W(dx)$ is a $1-$dimensional white noise independent of $L_t(x)$; $\,\,\sigma_d=\sqrt{\hat{R}(0)}$.

When $d=2$, then $Z_t$ is a standard Brownian motion; $\,\,\sigma_d=\sqrt{\hat{R}(0)/\pi}$.

When $d\geq 3$, then $Z_t$ is a standard Brownian motion; $\,\,\sigma_d=\sqrt{\pi^{-\frac{d}{2}}\Gamma(\frac{d}{2}-1)\int_{\R^d}R(x)|x|^{2-d}dx}$.
    \label{thm:mainTH}
\end{theorem}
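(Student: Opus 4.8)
The plan is to establish weak convergence in the standard two-step fashion: convergence of finite-dimensional distributions via characteristic functions, followed by tightness in $\C([0,1])$. The engine in both the Gaussian and Poissonian settings is to first condition on the Brownian path $B$ and exploit the explicit structure of each field. In the Gaussian case, conditionally on $B$ the variable $a(n)^{-1}\int_0^{nt}V_g(B_s)\,ds$ is centered Gaussian, so its conditional characteristic function is exactly $\exp(-\tfrac{\lambda^2}{2}a(n)^{-2}V_n)$ with the key functional
$$V_n=\int_0^{nt}\int_0^{nt}R(B_s-B_u)\,ds\,du.$$
In the Poissonian case, writing $\Phi(y)=\int_0^{nt}\phi(B_s-y)\,ds$ and applying Campbell's formula to the field $\omega$, the centering $c_p$ cancels the linear term and the conditional characteristic function equals $\exp\big(\int_{\R^d}(e^{i\lambda a(n)^{-1}\Phi(y)}-1-i\lambda a(n)^{-1}\Phi(y))\,dy\big)$; a second-order Taylor expansion produces the same leading factor $\exp(-\tfrac{\lambda^2}{2}a(n)^{-2}V_n)$, now with $R=R_p$, because $\int_{\R^d}\Phi(y)^2\,dy=V_n$. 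Thus both cases reduce to understanding the asymptotics of $V_n$, and the whole theorem hinges on it.

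Second, I would analyze $V_n$. Taking expectations and using $\E[R(B_s-B_u)]=\int_{\R^d}R(x)q_{|s-u|}(x)\,dx$ (with $q_\tau$ the $d$-dimensional heat kernel) together with Brownian scaling, the behavior splits by dimension. For $d\geq 3$ the time-kernel $\int_{\R^d}R(x)q_\tau(x)\,dx$ is integrable in $\tau$, and $\int_0^\infty\int_{\R^d}R(x)q_\tau(x)\,dx\,d\tau=\int_{\R^d}R(x)G(x)\,dx$ with $G(x)=\tfrac12\pi^{-d/2}\Gamma(\tfrac d2-1)|x|^{2-d}$ the Green's function; the symmetric double time integral contributes a factor $2nt$, yielding $a(n)^{-2}\E[V_n]\to\sigma_d^2 t$ for exactly the stated constant. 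For $d=2$ the kernel decays like $|s-u|^{-1}$, so the double time integral produces the logarithm, $\E[V_n]\sim\tfrac{\hat R(0)}{\pi}\,nt\log n$, matching $a(n)^2=n\log n$ and $\sigma_2^2=\hat R(0)/\pi$. For $d=1$, by contrast, $V_n$ does not concentrate: after scaling it converges to the \emph{random} limit $\hat R(0)\int_\R L_t(x)^2\,dx$, the self-intersection local time, which is why the $d=1$ limit retains the conditionally-Gaussian-in-$L_t$ representation $\int_\R L_t(x)\,W(dx)$.

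Third, for $d\geq 2$ I would upgrade the mean computation to concentration, showing $\Var(V_n)=o(a(n)^4)$ so that $a(n)^{-2}V_n\to\sigma_d^2 t$ in probability and the conditional characteristic function converges to $\exp(-\tfrac{\lambda^2}{2}\sigma_d^2 t)$ after dominated convergence in $B$. For the joint law at times $t_1<\dots<t_k$ the same argument applied to the step-weighted functional $\int_0^{nt_k}g(s)V(B_s)\,ds$, with $g(s)=\sum_{j:\,nt_j\geq s}\lambda_j$, shows that diagonal concentration forces the quadratic form to collapse onto $\sigma_d^2\int_0^{t_k}\bar g(r)^2\,dr$, giving independent Gaussian increments and hence the Brownian limit. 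In the Poissonian case one must additionally bound the cubic remainder $a(n)^{-3}\int_{\R^d}|\Phi(y)|^3\,dy$ from the Taylor expansion and verify it vanishes, which follows from the mixing bound \eqref{eq:mixingCondition} and moment estimates on $V_p$. Tightness I would obtain from a Kolmogorov-type fourth-moment increment estimate, $\E|X_n(t)-X_n(s)|^4\les|t-s|^2$, again reduced via conditioning to bounds on iterated integrals of $R$.

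The main obstacle is twofold, and both difficulties live in $d=2$. First, the variance estimate $\Var(V_n)=o(n^2\log^2 n)$ requires controlling a four-fold time integral of the four-point correlation of the field: the Gaussian case is handled by Wick's theorem (pairings of the increments $B_{s_i}-B_{s_j}$), while the Poissonian case needs the mixing hypothesis to decouple well-separated times, and the borderline logarithmic growth leaves little slack, so the cross terms must be estimated sharply rather than crudely. Second, extracting the precise constant $\hat R(0)/\pi$ in $d=2$ demands tracking the logarithmically divergent diagonal contribution carefully, separating the short-range part (where $R$ is genuinely felt, contributing $\hat R(0)$ after integrating $R$ against the spreading heat kernel) from the long-range part generating $\log n$; conflating the two is the easiest way to produce a wrong constant.
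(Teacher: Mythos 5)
Your overall architecture coincides with the paper's: conditional characteristic functions, reduction of both fields to the quadratic functional $V_n=\int_0^{nt}\int_0^{nt}R(B_s-B_u)\,ds\,du$, the random local-time limit in $d=1$, concentration of $a(n)^{-2}V_n$ in $d\geq 2$ plus vanishing cross terms for the finite-dimensional distributions, and a Kolmogorov-type fourth-moment bound for tightness; your constants (the Green's function identity in $d\geq 3$, the value $\hat R(0)/\pi$ in $d=2$) are also correct. However, you have attached the two genuinely field-dependent tools to the wrong steps, and one step fails as written.

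First, the estimate $\Var(V_n)=o(a(n)^4)$ does not involve the four-point correlation of the scenery at all: $V_n$ is a functional of the Brownian path alone, so its variance is a four-fold time integral of $\E\{R(B_{s_1}-B_{s_2})R(B_{s_3}-B_{s_4})\}$, an expectation over $B$ only. Hence the Gaussian and Poissonian cases are handled identically here (this is Lemma \ref{lem:con2ndmomentVar}); neither Wick's theorem on the field nor the mixing hypothesis enters, and the actual difficulty is the heat-kernel and logarithmic-potential estimates in $d=2$. Second, the four-point function of the scenery does enter, but in the Poissonian tightness step, exactly where your sketch suppresses it. Conditioning on $B$, the fourth moment of the compensated Poisson integral is \emph{not} three times the squared conditional variance: writing $\Phi(y)=\int_{ns}^{nt}\phi(B_u-y)\,du$, one has
\begin{equation*}
\E\{|X_n(t)-X_n(s)|^4\,|\,B\}=3\Bigl(a(n)^{-2}\int_{\R^d}\Phi(y)^2dy\Bigr)^2+a(n)^{-4}\int_{\R^d}\Phi(y)^4dy,
\end{equation*}
and the fourth-cumulant term is a four-fold integral of $\phi$ that cannot be expressed through $R$; so "iterated integrals of $R$" is not what conditioning produces, and bounding this extra term by $C|t-s|^{1+\delta}$ uniformly in $n$ requires separate and somewhat delicate work. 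The paper instead avoids conditioning at this point: it bounds the \emph{unconditional} fourth moment via the mixing property \eqref{eq:mixingCondition} through Lemma \ref{lem:mixing4thmoment}, which dominates the field's four-point function by sums of pair products $\varphi^{1/2}\varphi^{1/2}$, thereby reducing tightness to the same Brownian computation as Lemma \ref{lem:con2ndmomentVar} with $\varphi^{1/2}(|x|)$ in place of $R_p(x)$ (Remark \ref{remark:tight}). To complete your proposal you should either adopt that route or supply the missing uniform bound on the quartic cumulant term.
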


We note that when $d\geq 3$, then $\sigma_d=\sqrt{4(2\pi)^{-d}\int_{\R^d}\hat{R}(\xi)|\xi|^{-2}d\xi}$. Since $\hat{R}(\xi)\geq0$, $\sigma_d>0$ in both cases and the limit is nontrivial. When $d\leq 2$, the limit is nontrivial if $\hat{R}(0)\neq 0$. In the degenerate case where $\hat{R}(0)=0$, for instance if $c_p=0$ in the Poissonian case in $d=1,2$, then the limit obtained in the previous theorem is trivial. The scaling factor $a(n)$ should be chosen smaller to obtain a nontrivial limit. We prove the following result:
\begin{theorem}
In $d=1,2$, let $B_t, t\geq 0$ be a $d-$dimensional standard Brownian motion independent of the stationary random potential $V(x)$, which is chosen to be either Gaussian or Poissonian, and $R(x)=\E\{V(x+y)V(y)\}$ with $\hat{R}(\xi)|\xi|^{-2}$ integrable. Define $X_n(t)=n^{-\frac12}\int_0^{nt} V(B_s)ds$. Then we have $X_n(t)$ converges weakly in $\C([0,1])$ to $\sigma W_t$, with $W_t$ a standard Brownian motion and $\sigma=\sqrt{4(2\pi)^{-d}\int_{\R^d}\hat{R}(\xi)|\xi|^{-2}d\xi}$.
    \label{thm:degenerated12}
\end{theorem}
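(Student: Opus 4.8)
The plan is to prove convergence of the finite-dimensional distributions via the annealed characteristic function, and then tightness in $\C([0,1])$; the whole argument runs parallel to the case $d\geq3$ of Theorem~\ref{thm:mainTH}, the essential difference being that the integrability of $\hat R(\xi)|\xi|^{-2}$ near the origin, which in high dimension comes for free from the decay of the heat kernel, is now supplied by the degeneracy hypothesis $\hat R(0)=0$. Fix times $0\le t_1,\dots,t_k\le1$ and reals $\theta_1,\dots,\theta_k$, and set $w(s)=\sum_j\theta_j\mathbf 1_{[0,nt_j]}(s)$. In the Gaussian case, conditioning on the Brownian path makes $\sum_j\theta_jX_n(t_j)$ a centered Gaussian with conditional variance
\[
\Sigma_n=\frac1n\int_0^\infty\!\!\int_0^\infty w(s)\,w(u)\,R(B_s-B_u)\,ds\,du ,
\]
so that the annealed characteristic function equals $\E_B\{e^{-\Sigma_n/2}\}$. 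In the Poissonian case I would introduce the weighted occupation $g_n(y)=n^{-1/2}\int_0^\infty w(s)\,\phi(B_s-y)\,ds$; Campbell's formula, together with $\int g_n(y)\,dy$ cancelling the deterministic drift, gives the conditional characteristic function $\exp\!\big(\int_{\R^d}(e^{ig_n(y)}-1-ig_n(y))\,dy\big)$, whose quadratic part is $-\tfrac12\int g_n^2=-\tfrac12\Sigma_n$ after using $R_p(x)=\int\phi(x+z)\phi(z)\,dz$. Thus in both cases it suffices to show that $\Sigma_n\to\sigma^2\sum_{j,l}\theta_j\theta_l(t_j\wedge t_l)$ in $L^2(\Pb_B)$, and, for the Poissonian field, that the cubic-and-higher remainder in the exponent is negligible.

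For the mean of $\Sigma_n$ I would use $\E_B\{R(B_s-B_u)\}=\int_{\R^d}R(x)q_{|s-u|}(x)\,dx=(2\pi)^{-d}\int_{\R^d}\hat R(\xi)e^{-|s-u||\xi|^2/2}d\xi$ and perform the two time integrations. For a single time this yields
\[
\E_B\{\Sigma_n\}=4(2\pi)^{-d}\int_{\R^d}\frac{\hat R(\xi)}{|\xi|^2}\Big[\,t-\frac{2\big(1-e^{-nt|\xi|^2/2}\big)}{n|\xi|^2}\Big]\,d\xi .
\]
The bracket is bounded by $t$ and tends pointwise to $t$ as $n\to\infty$, so dominated convergence --- legitimate \emph{precisely because} $\hat R(\xi)|\xi|^{-2}\in L^1$ --- gives $\E_B\{\Sigma_n\}\to\sigma^2 t$; the multi-time covariance is obtained by the same computation with $w(s)w(u)$ in place of $\mathbf 1_{[0,nt]}(s)\mathbf 1_{[0,nt]}(u)$, producing $\sigma^2\sum_{j,l}\theta_j\theta_l(t_j\wedge t_l)$.

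The core of the argument, and the step I expect to be the main obstacle, is the self-averaging $\Var_B(\Sigma_n)\to0$. This reduces to controlling the fourth-order functional $n^{-2}\E_B\{\int\!\!\int\!\!\int\!\!\int w\,w\,w\,w\,R(B_{s_1}-B_{s_2})R(B_{s_3}-B_{s_4})\}$ over the four time variables and showing that its leading contribution exactly cancels $(\E_B\{\Sigma_n\})^2$, while the ``connected'' configurations, in which the two factors of $R$ share Brownian increments, contribute only $o(n^2)$. I would order the four times, apply the Markov property at each consecutive increment, and bound each block through its Fourier/heat-kernel representation; the compact support of $R$ (resp.\ of $\phi$) keeps the spatial integrals finite, and the extra factor $n$ in the denominator forces the interaction terms to vanish. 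This is where the degenerate integrability condition is used most delicately, since several of the intermediate integrals are finite only after the cancellation of the $|\xi|^{-2}$ singularity against the vanishing of $\hat R$ at the origin.

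It then remains to dispatch the Poissonian remainder and establish tightness. For the higher-order term in the Poisson exponent I would use $|e^{ig_n}-1-ig_n|\le\tfrac12 g_n^2\wedge |g_n|^3$ to bound it by $n^{-3/2}\int_{\R^d}|g_n(y)|^3\,dy$, and control $\int|g_n|^3$ via the boundedness and compact support of $\phi$ together with moment estimates on the occupation measure, the finite-range structure of $V_p$ and the mixing coefficient in~(\ref{eq:mixingCondition}) entering exactly as announced there; this shows the remainder is $o(1)$, so the Poisson characteristic function has the same Gaussian limit. Finally, tightness in $\C([0,1])$ follows from the Kolmogorov--Chentsov criterion once I prove a uniform-in-$n$ moment bound of the form $\E\{|X_n(t)-X_n(s)|^{2m}\}\les|t-s|^{m}$, which for the Gaussian potential is immediate from the conditional-variance computation above applied to the increment $X_n(t)-X_n(s)$, and for the Poissonian potential follows from the analogous cumulant estimates.
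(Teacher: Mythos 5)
Your Gaussian argument is essentially the paper's own proof and is correct: the mean and self-averaging of the conditional variance are run entirely in Fourier variables, where only the integrability of $\hat R(\xi)|\xi|^{-2}$ enters, and tightness follows from the Wick fourth moment exactly as in the $d\geq 3$ part of Proposition \ref{prop:tightNonDEGE}. The genuine gap is in your Poissonian argument, in both of the steps you treat as routine. First, the cubic remainder. Your bound via ``boundedness and compact support of $\phi$ together with moment estimates on the occupation measure'' (equivalently, the mixing coefficient) throws away the cancellation coming from $\int_{\R}\phi=0$, and without that cancellation the bound diverges in $d=1$: by Brownian and local-time scaling, $n^{-1/2}\int_0^{n}|\phi(B_s-y)|\,ds$ is of order one on a set of $y$'s of measure of order $\sqrt n$, so $\int_{\R}|g_n(y)|^3dy\les \sqrt n$ rather than $o(1)$. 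This is precisely the obstruction recorded in the paper: the characteristic-function estimate of the higher-order terms succeeds for $d=2$ with the $n^{1/2}$ scaling (Remark \ref{remark:d2PoissonDEGE}) but does not yield \eqref{eq:highOrderDEGE} when $d=1$. Second, tightness. For the Poisson field the fourth moment of an increment contains, besides the pairing terms you can treat as in the Gaussian case, the fully connected fourth cumulant $\int_{\R^d}\prod_{i=1}^4\phi(B_{u_i}-y)\,dy$. After Fourier transform and the time integrations, controlling this term under the $n^{-1/2}$ scaling requires integrability of $|\hat\phi(\xi)|\,|\xi|^{-2}=\sqrt{\hat R(\xi)}\,|\xi|^{-2}$, which is strictly stronger than the hypothesis $\hat R(\xi)|\xi|^{-2}\in L^1$ (for instance $\hat R(\xi)\sim|\xi|^{3/2}$ near the origin in $d=1$ satisfies the latter but not the former). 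So the moment bound $\E\{|X_n(t)-X_n(s)|^{4}\}\les|t-s|^{2}$ you invoke is not available under the theorem's assumptions; this is exactly the caveat in the paper's closing remark of Section \ref{sec:4}.

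The paper closes both gaps by abandoning the characteristic-function route for the Poissonian case altogether. It verifies that the Poisson field fits the random-medium framework, that the environment process $y_s^\omega=\tau_{-B_s}\omega$ is a stationary, ergodic, reversible Markov process with generator $L=\frac12\sum_{k=1}^d D_k^2$, and that
\begin{equation*}
\langle -L^{-1}\V,\V\rangle=\frac{2}{(2\pi)^d}\int_{\R^d}\frac{\hat R(\xi)}{|\xi|^2}\,d\xi<\infty ,
\end{equation*}
and then invokes the Kipnis--Varadhan theorem (Proposition \ref{prop:resultKV}), whose martingale approximation delivers the full functional central limit theorem --- finite-dimensional convergence \emph{and} tightness --- under exactly the stated integrability hypothesis. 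To complete your program you would either need this detour, or need to strengthen the hypothesis to $\sqrt{\hat R(\xi)}\,|\xi|^{-2}\in L^1$, which the theorem as stated does not grant.
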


\begin{remark}
In the degenerate case, the scaling factor $n^{-\frac12}$ is the same as in $d\geq 3$. In $d=1$, the limiting processes are different for the non-degenerate and degenerate cases. If $\hat{R}(\xi)\in L^1$, then for $\hat{R}(\xi)|\xi|^{-2}$ to be integrable,  we only need to assume that $\hat{R}(\xi)\les |\xi|^\alpha$ at the origin with $\alpha>1$ when $d=1$ and $\alpha>0$ when $d=2$.
\end{remark}

We will refer to Theorem \ref{thm:mainTH} and \ref{thm:degenerated12} as non-degenerate and degenerate cases respectively in the following sections. The proof contains convergence of finite dimensional distributions and tightness result.

\section{Non-degenerate case when $d\geq 1$}
\label{sec:3}

\subsection{Convergence of finite dimensional distributions}

We first prove the weak convergence of finite dimensional distributions through the estimation of characteristic functions.

For any $0=t_0<t_1<\ldots<t_N\leq 1$ and $\alpha_i\in \R, i=1,\ldots,N$, by considering $Y_N:=\sum_{i=1}^N \alpha_i (X_n(t_i)-X_n(t_{i-1}))$, we have the following explicit expressions.

In the Gaussian case, since $Y_N=\sum_{i=1}^N\alpha_ia(n)^{-1}\int_{nt_{i-1}}^{nt_i}V_g(B_s)ds$, we obtain
\begin{equation}
\begin{aligned}
\E\{\exp(i\theta Y_N)\}=&\E\{\E\{\exp(i\theta Y_N)|B\}\}\\
=&\E\{\exp(-\frac{1}{2}\theta^2\sum_{i,j=1}^N\alpha_i\alpha_j\frac{1}{a(n)^2}\int_{nt_{i-1}}^{nt_i}\int_{nt_{j-1}}^{nt_j}R_g(B_s-B_u)dsdu)\}.
\label{eq:chrGauss}
\end{aligned}
\end{equation}
Since $\E\{\exp(i\theta Y_N)|B\}$ is bounded by $1$, to prove convergence of $\E\{\exp(i\theta Y_N)\}$, we only need to prove the convergence in probability of $a(n)^{-2}\int_{nt_{i-1}}^{nt_i}\int_{nt_{j-1}}^{nt_j}R_g(B_s-B_u)dsdu$.

In the Poissonian case, we write
\begin{equation*}
Y_N=\int_{\R^d}\left(\sum_{i=1}^N\alpha_i \frac{1}{a(n)}\int_{nt_{i-1}}^{nt_i}\phi(B_s-y)ds\right)\omega(dy)
-\frac{c_p}{a(n)}\sum_{i=1}^N\alpha_i(nt_i-nt_{i-1}),
\end{equation*}
and straightforward calculations lead to
\begin{equation*}
\E\{ \exp(i\theta Y_N)\}=\E\{ \exp(\int_{\R^d}( e^{i\theta F_n(y)}-1)dy)\}\exp(-i\theta\frac{c_p}{a(n)}\sum_{i=1}^N\alpha_i(nt_i-nt_{i-1})),
\end{equation*}
where $F_n(y):=\sum_{i=1}^N\alpha_i a(n)^{-1}\int_{nt_{i-1}}^{nt_i}\phi(B_s-y)ds$. Since $\int_{\R^d}\phi(x)dx=c_p$, we obtain
\begin{equation}
\E\{\exp(i\theta Y_N)\}=\E\{\E\{\exp(i\theta Y_N)|B\}\}=\E\{\exp(\int_{\R^d}\sum_{k=2}^\infty \frac{1}{k!}(i\theta F_n(y))^kdy)\}.
\label{eq:chrPoisson}
\end{equation}
Similarly, $\E\{\exp(i\theta Y_N)|B\}$ is bounded by $1$, so it suffices to show the convergence in probability of $\int_{\R^d}\sum_{k=2}^\infty \frac{1}{k!}(i\theta F_n(y))^kdy$. When $k=2$,
\begin{equation}
\int_{\R^d}F_n(y)^2dy=\sum_{i,j=1}^N\alpha_i\alpha_ja(n)^{-2}\int_{nt_{i-1}}^{nt_i}\int_{nt_{j-1}}^{nt_j}R_p(B_s-B_u)dsdu
\end{equation}
is the conditional variance of $Y_N$ given $B_s$. We will see that the proof of the Poissonian case implies the Gaussian case.

\subsubsection{Poissonian case $d=1$}

When $d=1$, since $V_p$ is mixing, \cite[Theorem 2]{pardoux2006homogenization} implies the result. We give a different proof using characteristic functions.

First of all, by scaling property of Brownian motion, we do not distinguish between $F_n(y)=\sum_{i=1}^N \alpha_in^{-\frac{3}{4}}\int_{nt_{i-1}}^{nt_i}\phi(B_s-y)ds$ and $F_n(y)=\sum_{i=1}^N\alpha_in^{\frac{1}{4}}\int_{t_{i-1}}^{t_i} \phi(\sqrt{n}B_s-y)ds$. Using the second representation, we have
\begin{equation*}
F_n(y)=\sum_{i=1}^N\alpha_in^{\frac{1}{4}}\int_{t_{i-1}}^{t_i} \phi(\sqrt{n}B_s-y)ds=\sum_{i=1}^N\alpha_in^{\frac{1}{4}}\int_{\R}\phi(\sqrt{n}x-y)(L_{t_i}(x)-L_{t_{i-1}}(x))dx,
\end{equation*}
where $L_t(x)$ is the local time of $B_s$. So
\begin{equation}
\int_{\R^d}F_n(y)^2dy=\sum_{i,j=1}^N\alpha_i\alpha_j
\sqrt{n}\int_{\R^2}R_p(\sqrt{n}(z-x))(L_{t_i}(x)-L_{t_{i-1}}(x))(L_{t_j}(z)-L_{t_{j-1}}(z))dxdz.
\end{equation}

The following two propositions prove the convergence in probability of $\int_{\R^d}\sum_{k=2}^\infty \frac{1}{k!}(i\theta F_n(y))^kdy$.

\begin{proposition}
$\int_{\R}F_n(y)^2dy\to \hat{R}_p(0)\sum_{i,j=1}^N \alpha_i\alpha_j\int_{\R}(L_{t_i}(x)-L_{t_{i-1}}(x))(L_{t_j}(x)-L_{t_{j-1}}(x))dx$ almost surely.
\label{prop:conVar1d}
\end{proposition}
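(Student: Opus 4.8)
The plan is to recognize the rescaled covariance $K_n(u):=\sqrt{n}\,R_p(\sqrt{n}\,u)$ as a (signed) approximate identity in dimension one, and to reduce the statement to the continuity of a cross-correlation of local-time increments. Since the quantity on the left is a finite sum over $i,j$, it suffices to prove, for each fixed pair $(i,j)$ and almost every Brownian path,
$$
\sqrt{n}\int_{\R^2} R_p(\sqrt{n}(z-x))\, g_i(x)\, g_j(z)\, dx\, dz \;\longrightarrow\; \hat{R}_p(0)\int_{\R} g_i(x)\, g_j(x)\, dx,
$$
where I abbreviate $g_i(x):=L_{t_i}(x)-L_{t_{i-1}}(x)$. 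The first step is the change of variables $u=z-x$, which rewrites the left-hand side as $\int_{\R} K_n(u)\, h_{ij}(u)\, du$ with $h_{ij}(u):=\int_{\R} g_i(x)\, g_j(x+u)\, dx$ the cross-correlation of the two increments.

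Second, I would record the two structural facts that make $K_n$ an approximate identity of total mass $\hat{R}_p(0)$. The substitution $v=\sqrt{n}\,u$ gives $\int_{\R} K_n(u)\,du=\int_{\R}R_p(v)\,dv=\hat{R}_p(0)$ and $\|K_n\|_{L^1}=\|R_p\|_{L^1}$, the latter finite and independent of $n$ because $R_p$ is continuous and compactly supported. Moreover, compact support of $R_p$ gives that for every $\delta>0$ the tail mass $\int_{|u|>\delta}|K_n(u)|\,du=\int_{|v|>\sqrt{n}\,\delta}|R_p(v)|\,dv$ vanishes once $\sqrt{n}\,\delta$ exceeds the support radius of $R_p$. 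Note $K_n$ is a signed kernel, since the covariance $R_p$ need not be pointwise nonnegative, so these estimates must be run with $|K_n|$ rather than $K_n$, while the main-term identity uses $\int K_n=\hat{R}_p(0)$.

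Third, I need the test function $h_{ij}$ to be bounded and continuous at the origin. Here I use that, for almost every Brownian path, the local time $L_t(x)$ is jointly continuous in $(t,x)$ and, for $t\le 1$, compactly supported in $x$; hence $g_i\in L^2(\R)$ almost surely. Cauchy--Schwarz then yields $|h_{ij}(u)|\le \|g_i\|_{L^2}\|g_j\|_{L^2}$, so $h_{ij}$ is bounded, and strong continuity of translation on $L^2(\R)$ gives $h_{ij}(u)\to h_{ij}(0)$ as $u\to 0$. With these inputs the standard approximate-identity estimate
$$
\Bigl|\int_{\R} K_n(u)\, h_{ij}(u)\, du-\hat{R}_p(0)\, h_{ij}(0)\Bigr|\le \|R_p\|_{L^1}\!\!\sup_{|u|\le\delta}|h_{ij}(u)-h_{ij}(0)|+2\|h_{ij}\|_\infty\!\int_{|u|>\delta}\!|K_n(u)|\,du
$$
is concluded by first choosing $\delta$ small and then letting $n\to\infty$. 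This gives the limit $\hat{R}_p(0)\,h_{ij}(0)=\hat{R}_p(0)\int_\R g_i g_j\,dx$ for each pair, and summing the finitely many terms yields the proposition.

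I expect the main obstacle to be exactly the pathwise regularity of the local time: the whole argument rests on $g_i$ being a genuine $L^2$ function for almost every path, which is what guarantees continuity of $h_{ij}$ at $0$ and lets the signed kernel $K_n$ act like $\hat{R}_p(0)$ times a Dirac mass. Everything else, namely the change of variables and the approximate-identity estimate, is routine once this regularity and the compact support of $R_p$ are in hand.
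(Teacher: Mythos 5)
Your proof is correct and takes essentially the same route as the paper: the same rescaling change of variables $u=z-x$, followed by the almost sure continuity and compact support of the local time to pass to the limit. The only difference is cosmetic: you package the limit step as an approximate-identity estimate for the signed kernel $\sqrt{n}R_p(\sqrt{n}\,u)$ acting on the cross-correlation $h_{ij}$, using $L^2$-continuity of translation, whereas the paper applies dominated convergence directly to $\int_{\R^2}R_p(y)\,g_i(x)\,g_j(x+\tfrac{y}{\sqrt{n}})\,dy\,dx$ using pointwise continuity of $g_j$.
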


\begin{proposition}
$\int_{\R} \sum_{k=3}^\infty \frac{1}{k!}(i\theta F_n(y))^k dy\to 0$ almost surely.
\label{prop:conRemain1d}
\end{proposition}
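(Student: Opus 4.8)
The plan is to reduce the claim to the single deterministic-per-$\omega$ estimate that $\int_\R |F_n(y)|^3\,dy\to 0$ almost surely, and then to prove this by crude but robust pointwise control of $F_n$. For the reduction, I would recognize the summand as
\begin{equation*}
\sum_{k=3}^\infty \frac{1}{k!}(i\theta F_n(y))^k = e^{i\theta F_n(y)}-1-i\theta F_n(y)+\tfrac12\theta^2 F_n(y)^2,
\end{equation*}
a bounded, compactly supported (in $y$) function, as established below, so the integral is well defined. Since $F_n$ is real ($\phi$ is real), the elementary remainder bound $|e^{it}-1-it+\tfrac12 t^2|\le \tfrac16|t|^3$, valid for all real $t$, applied with $t=\theta F_n(y)$ gives
\begin{equation*}
\Bigl|\int_\R \sum_{k=3}^\infty \frac{1}{k!}(i\theta F_n(y))^k\,dy\Bigr| \le \frac{|\theta|^3}{6}\int_\R |F_n(y)|^3\,dy ,
\end{equation*}
so everything comes down to the decay of the third-moment integral.

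For that estimate I would work from the local time representation $F_n(y)=n^{1/4}\sum_{i}\alpha_i\int_\R \phi(\sqrt n x - y)\ell_i(x)\,dx$, where $\ell_i(x)=L_{t_i}(x)-L_{t_{i-1}}(x)\ge 0$ and $\sum_i \ell_i(x)=L_{t_N}(x)\le \sup_x L_1(x)=:M$. Bounding by $|\phi|$ and substituting $x=(y+u)/\sqrt n$ yields the uniform estimate
\begin{equation*}
\sup_y |F_n(y)| \le n^{-1/4}\Bigl(\max_i|\alpha_i|\Bigr)\|\phi\|_{L^1}M,
\end{equation*}
and since one-dimensional Brownian local time is a.s. jointly continuous and compactly supported in the space variable on $[0,1]$, the constant $M$ is a.s. finite (and the range of $B$ on $[0,1]$ is a.s. a bounded interval, which gives the compact support of $F_n$ in $y$ used above). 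I would then split $|F_n|^3\le \sup_y|F_n(y)|\cdot F_n^2$ and integrate:
\begin{equation*}
\int_\R |F_n(y)|^3\,dy \le \sup_y|F_n(y)|\int_\R F_n(y)^2\,dy \le n^{-1/4}\Bigl(\max_i|\alpha_i|\Bigr)\|\phi\|_{L^1}M\int_\R F_n(y)^2\,dy .
\end{equation*}
By Proposition \ref{prop:conVar1d} the factor $\int_\R F_n^2\,dy$ converges a.s. to a finite limit, hence is a.s. bounded in $n$, so the $n^{-1/4}$ prefactor forces $\int_\R|F_n|^3\,dy\to 0$ a.s., completing the proof. Alternatively, one can avoid invoking Proposition \ref{prop:conVar1d} and bound $\int F_n^2$ by $\sup_y|F_n|^2$ times the length of the support of $F_n$ in $y$, which is at most $\sqrt n$ times the oscillation of $B$ on $[0,1]$ plus the diameter of $\mathrm{supp}\,\phi$; this produces the same decay self-containedly.

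I expect no serious obstacle: the argument is essentially a scaling computation, and the only point requiring genuine care is the a.s. finiteness of the random constants $M=\sup_x L_1(x)$ and of the spatial oscillation of $B$ on $[0,1]$, both of which follow from the joint continuity and compact spatial support of Brownian local time. The one piece of bookkeeping to check in the self-contained route is that the three factors of $n^{-1/4}$ from the uniform bound (total $n^{-3/4}$) beat the $O(\sqrt n)$ growth of the $y$-support, leaving net decay $n^{-3/4}\cdot n^{1/2}=n^{-1/4}\to 0$; leaning instead on Proposition \ref{prop:conVar1d} for the $L^2$ factor removes even this step.
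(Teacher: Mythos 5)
Your proof is correct, but it takes a genuinely different route from the paper's. The paper keeps the full series and estimates each power separately: for a fixed realization it bounds $|F_n(y)|\les n^{1/4}\int_{|x|<M}|\phi(\sqrt{n}x-y)|\,dx$ (with $M$ a pathwise bound on the range of $B$) and then, by a change of variables recentering the $k$-fold integral at $x_1$, obtains $|\int_\R F_n(y)^k\,dy|\les n^{-(k-2)/4}$ for every $k\ge 3$ with a constant of the form $C_0C_1^k$, after which the series $\sum_{k\ge 3}\frac{|\theta|^k}{k!}n^{-(k-2)/4}$ is summed directly. You instead collapse the tail of the exponential series at the outset via the Taylor remainder bound $|e^{it}-1-it+\tfrac12 t^2|\le \tfrac16|t|^3$ (valid since $F_n$ is real), which reduces everything to the single estimate $\int_\R|F_n(y)|^3\,dy\to 0$; you then obtain this from the uniform bound $\sup_y|F_n(y)|\les n^{-1/4}\sup_x L_1(x)$ together with the a.s. boundedness in $n$ of $\int_\R F_n(y)^2\,dy$, borrowed from Proposition \ref{prop:conVar1d} (or, self-containedly, from the $O(\sqrt{n})$ length of the $y$-support of $F_n$, where your bookkeeping $n^{-3/4}\cdot n^{1/2}=n^{-1/4}$ is right). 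What your route buys: no per-$k$ bookkeeping of constants, no multi-fold change of variables, and a clean reuse of the already-proved $L^2$ convergence. What the paper's route buys: its term-by-term moment estimate is the same template it deploys again in Proposition \ref{prop:conRemain23d} for $d\ge 2$, where local time does not exist, no sup bound of your kind is available, and one must estimate expectations of $k$-fold integrals, so the exposition stays uniform across dimensions. Both arguments ultimately rest on the same pathwise facts --- boundedness and compact support of $L_1$, i.e.\ compactness of the Brownian range on $[0,1]$ --- and the same net decay rate $n^{-1/4}$.
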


\begin{proof}[Proof of Proposition \ref{prop:conVar1d}]
For any $i,j=1,\ldots,N$, we consider
\begin{equation*}
\begin{aligned}
(i):=&\sqrt{n}\int_{\R^2}R_p(\sqrt{n}(z-x))(L_{t_i}(x)-L_{t_{i-1}}(x))(L_{t_j}(z)-L_{t_{j-1}}(z))dxdz\\
=&\int_{\R^2}R_p(y)(L_{t_i}(x)-L_{t_{i-1}}(x))(L_{t_j}(x+\frac{y}{\sqrt{n}})-L_{t_{j-1}}(x+\frac{y}{\sqrt{n}}))dydx,
\end{aligned}
\end{equation*}
and because $L_t(x)$ is continuous with compact support almost surely, we have
\begin{equation*}
(i)\to \hat{R}_p(0)\int_{\R}(L_{t_i}(x)-L_{t_{i-1}}(x))(L_{t_j}(x)-L_{t_{j-1}}(x))dx,
\end{equation*}
which completes the proof.
\end{proof}

\begin{proof}[Proof of Proposition \ref{prop:conRemain1d}]
For a fixed realization of $B_s$, we have $|F_n(y)|\les n^{\frac{1}{4}}\int_{|x|<M}|\phi(\sqrt{n}x-y)|dx$, where $M$ is a constant depending on the realization and thus
\begin{equation*}
\begin{aligned}
&|\int_{\R} F_n(y)^kdy|\\
\les &n^{\frac{k}{4}}\int_{\R}\int_{[-M,M]^k}\prod_{i=1}^k |\phi(\sqrt{n}x_i-y)|dxdy=
n^{\frac{k}{4}}\int_{\R}\int_{[-M,M]^k}\prod_{i=1}^k |\phi(\sqrt{n}(x_i-x_1)+y)|dxdy\\
\leq &\frac{1}{n^{\frac{k-2}{4}}}\int_{\R^k}\int_{[-M,M]}|\phi(y)|\prod_{i=2}^k |\phi(x_i-\sqrt{n}x_1+y)|dxdy\les \frac{1}{n^{\frac{k-2}{4}}}.
\end{aligned}
\end{equation*}
Since $\sum_{k=3}^\infty \frac{1}{k!}\frac{|\theta|^k}{n^{\frac{k-2}{4}}}\to 0$ as $n\to \infty$, the proof is complete.
\end{proof}

Recalling \eqref{eq:chrPoisson}, by Proposition \ref{prop:conVar1d} and \ref{prop:conRemain1d}, we have proved the almost sure convergence of  the exponents. Therefore, by the Lebesgue dominated convergence theorem we have
\begin{equation}
\begin{aligned}
\E\{ \exp(i\theta Y_N)\}=&\E\{ \exp(\int_{\R^d}\sum_{k=2}^\infty \frac{1}{k!}(i\theta F_n(y))^kdy)\}\\
\to& \E\{\exp(-\frac{1}{2}\theta^2\hat{R}_p(0)\sum_{i,j=1}^N \alpha_i\alpha_j\int_{\R}(L_{t_i}(x)-L_{t_{i-1}}(x))(L_{t_j}(x)-L_{t_{j-1}}(x))dx)\}\\
=&\E\{\exp(i\theta\sigma_d\sum_{i=1}^N\alpha_i(Z_{t_i}-Z_{t_{i-1}}))\}
\end{aligned}
\end{equation}
when $Z_t=\int_{\R}L_t(x)W(dx)$.

\subsubsection{Poissonian case $d\geq 2$}

When $d\geq 2$, the local time does not exist, and to prove the convergence of the conditional variance of $X_n(t)$ given $B_s$, we need to calculate fourth moments. First, we define
\begin{equation}
\V_n=\E\{X_n(t)^2|B_s, s\in [0,t]\}=\frac{1}{a(n)^2}\int_0^{nt}\int_0^{nt}R_p(B_s-B_u)dsdu
\end{equation}
so that $\E\{X_n(t)^2\}=\E\{\V_n\}$. The following two lemmas show that the conditional variance converges in probability.

\begin{lemma}
$\E\{\V_n\}\to \sigma_d^2t$ as $n\to \infty$.
\label{lem:con1stmomentVar}
\end{lemma}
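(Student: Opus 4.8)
The plan is to pass the expectation inside the double time integral and exploit both the independence of $V_p$ and $B$ and the Gaussian law of the Brownian increments. Since $\V_n$ is itself a conditional expectation, $\E\{\V_n\}=a(n)^{-2}\int_0^{nt}\int_0^{nt}\E\{R_p(B_s-B_u)\}\,ds\,du$, and as $B_s-B_u$ is centered Gaussian with variance $|s-u|$ in each coordinate, $\E\{R_p(B_s-B_u)\}=\int_{\R^d}R_p(x)q_{|s-u|}(x)\,dx=:g(|s-u|)$. Writing $R_p$ through its power spectrum, $R_p(x)=(2\pi)^{-d}\int_{\R^d}\hat{R}_p(\xi)e^{i\xi x}\,d\xi$, and using $\E\{e^{i\xi(B_s-B_u)}\}=e^{-|s-u||\xi|^2/2}$, gives the spectral form $g(\tau)=(2\pi)^{-d}\int_{\R^d}\hat{R}_p(\xi)e^{-\tau|\xi|^2/2}\,d\xi$. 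By symmetry in $(s,u)$ the double integral collapses to $\E\{\V_n\}=2a(n)^{-2}\int_0^{nt}(nt-\tau)g(\tau)\,d\tau$, and carrying out the elementary $\tau$-integral after interchanging it with the $\xi$-integral yields the closed form
\[
\E\{\V_n\}=\frac{2(2\pi)^{-d}}{a(n)^2}\int_{\R^d}\hat{R}_p(\xi)\left(\frac{2nt}{|\xi|^2}-\frac{4(1-e^{-nt|\xi|^2/2})}{|\xi|^4}\right)d\xi,
\]
all interchanges being justified by Fubini since $\hat{R}_p\geq 0$ and $R_p\in L^1$.

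For $d\geq 3$ I would take $a(n)^2=n$ and split the bracket. The first piece gives exactly $4t(2\pi)^{-d}\int_{\R^d}\hat{R}_p(\xi)|\xi|^{-2}\,d\xi=\sigma_d^2 t$, which is finite because $\hat{R}_p$ is bounded and $|\xi|^{-2}$ is locally integrable for $d\geq 3$; this matches the value $\sigma_d^2=4(2\pi)^{-d}\int\hat{R}(\xi)|\xi|^{-2}\,d\xi$ noted after Theorem \ref{thm:mainTH}. The second piece is $8(2\pi)^{-d}\int_{\R^d}\hat{R}_p(\xi)|\xi|^{-4}n^{-1}(1-e^{-nt|\xi|^2/2})\,d\xi$; using $1-e^{-nt|\xi|^2/2}\leq nt|\xi|^2/2$, its integrand is dominated by the integrable, $n$-independent function $\tfrac{t}{2}\hat{R}_p(\xi)|\xi|^{-2}$ and tends to $0$ pointwise, so dominated convergence sends it to $0$.

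For $d=2$ the integral $\int\hat{R}_p(\xi)|\xi|^{-2}\,d\xi$ diverges at the origin, so the two pieces cannot be separated; this logarithmic divergence is precisely what the factor $\log n$ in $a(n)^2=n\log n$ absorbs. Here I prefer the real-space form $\E\{\V_n\}=2(n\log n)^{-1}\int_0^{nt}(nt-\tau)g(\tau)\,d\tau$ together with the asymptotics $g(\tau)\to(2\pi\tau)^{-1}\hat{R}_p(0)$ as $\tau\to\infty$, which follows from $g(\tau)=(2\pi\tau)^{-1}\int_{\R^2}R_p(x)e^{-|x|^2/2\tau}\,dx$ and dominated convergence, using that $R_p$ is continuous and compactly supported. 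Splitting the $\tau$-integral at a large fixed $\tau_0$, the contribution from $[0,\tau_0]$ is $O(nt)$ and dies after division by $n\log n$, while on $[\tau_0,nt]$ I substitute $g(\tau)=(2\pi\tau)^{-1}\hat{R}_p(0)(1+o(1))$ and use $\int_{\tau_0}^{nt}(nt-\tau)\tau^{-1}\,d\tau=nt\log(nt/\tau_0)-(nt-\tau_0)\sim nt\log n$. Multiplying by $2(2\pi)^{-1}\hat{R}_p(0)(n\log n)^{-1}$ yields $\hat{R}_p(0)t/\pi=\sigma_2^2 t$.

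The main obstacle is the $d=2$ asymptotics: one must show that the $o(1)$ error in $g(\tau)\sim(2\pi\tau)^{-1}\hat{R}_p(0)$ contributes negligibly after being integrated against $(nt-\tau)$ and scaled by $(n\log n)^{-1}$, so that only the leading $nt\log n$ term survives and all boundary and subleading terms (the $-(nt-\tau_0)$ piece, the $\log t$ and $\log\tau_0$ shifts, and the region where the asymptotic is not yet accurate) are $o(n\log n)$. This is a careful but routine estimate. The Gaussian case is identical, since its conditional variance in \eqref{eq:chrGauss} is the same double integral of $R_g$, and the higher-order ($k\geq 3$) spectral terms are treated separately in the subsequent lemma.
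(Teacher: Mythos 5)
Your proof is correct, and for $d\geq 3$ it is essentially the paper's own argument: Fourier transform, the explicit time integral, and dominated convergence using only the integrability of $\hat{R}_p(\xi)|\xi|^{-2}$ (your closed form for the bracket agrees with the paper's expression $\frac{4}{|\xi|^2}\int_0^t(1-e^{-\frac12|\xi|^2ns})ds$ after simplification). For $d=2$ you take a genuinely different route: the paper keeps the spatial variable, changes variables to $\lambda=|x|^2/(2u)$ in the time integral, integrates by parts to produce the explicit factor $e^{-|x|^2/(2ns)}\log\frac{2ns}{|x|^2}+\int_{|x|^2/(2ns)}^{\infty}e^{-\lambda}\log\lambda\,d\lambda$, and then applies dominated convergence against $R_p(x)\,dx\,ds$ in a single step; you instead integrate out the spatial variable first, obtaining $g(\tau)=\E\{R_p(B_\tau)\}=(2\pi\tau)^{-1}\int_{\R^2}R_p(x)e^{-|x|^2/(2\tau)}dx\sim \hat{R}_p(0)/(2\pi\tau)$, and extract the $\log n$ from $\int_{\tau_0}^{nt}\tau^{-1}(nt-\tau)\,d\tau$ after splitting at a fixed $\tau_0$. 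The step you flag as the "main obstacle" is indeed routine: writing $g(\tau)=(2\pi\tau)^{-1}(\hat{R}_p(0)+\delta(\tau))$ with $\delta(\tau)\to 0$, the error contributes at most $\sup_{\tau\geq\tau_0}|\delta(\tau)|\cdot\frac{t}{\pi}(1+o(1))$ after rescaling, which vanishes as $\tau_0\to\infty$. Your ordering of integrations makes the origin of the logarithmic scaling more transparent (it is the divergence of $\int\tau^{-1}d\tau$, i.e.\ the failure of transience in $d=2$), while the paper's ordering avoids the two-parameter $\eps$--$\tau_0$ limit and packages everything into one dominated-convergence argument; the two yield the same constant $\sigma_2^2=\hat{R}_p(0)/\pi$.
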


\begin{lemma}
$\E\{\V_n^2\} \to \sigma_d^4t^2$ as $n\to \infty$.
\label{lem:con2ndmomentVar}
\end{lemma}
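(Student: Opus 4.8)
The plan is to deduce the claim from the concentration of $\V_n$: since Lemma~\ref{lem:con1stmomentVar} already gives $(\E\{\V_n\})^2\to\sigma_d^4t^2$, it suffices to prove $\Var(\V_n)=\E\{\V_n^2\}-(\E\{\V_n\})^2\to0$. Expanding the square,
\[
\E\{\V_n^2\}=\frac{1}{a(n)^4}\int_{[0,nt]^4}G_2\,ds_1\,du_1\,ds_2\,du_2,\qquad G_2:=\E\{R_p(B_{s_1}-B_{u_1})R_p(B_{s_2}-B_{u_2})\},
\]
while $(\E\{\V_n\})^2$ is the same integral with $G_2$ replaced by $g(s_1-u_1)\,g(s_2-u_2)$, where $g(\tau):=\E\{R_p(B_\tau)\}$ is the mean computed against the heat kernel. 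Hence $\Var(\V_n)=a(n)^{-4}\int_{[0,nt]^4}\big(G_2-g(s_1-u_1)g(s_2-u_2)\big)$.

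The key structural observation is that, writing $I_1=[u_1\wedge s_1,u_1\vee s_1]$ and $I_2=[u_2\wedge s_2,u_2\vee s_2]$, on the \emph{separated} region $I_1\cap I_2=\emptyset$ the increments $B_{s_1}-B_{u_1}$ and $B_{s_2}-B_{u_2}$ are independent, so there $G_2=g(s_1-u_1)g(s_2-u_2)$ exactly and the integrand vanishes. Therefore the difference is supported on the overlap region, and it is enough to show that both $a(n)^{-4}\int_{\mathrm{overlap}}G_2$ and $a(n)^{-4}\int_{\mathrm{overlap}}g(s_1-u_1)g(s_2-u_2)$ tend to $0$.

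To bound these I would order the four times as $\tau_1\le\tau_2\le\tau_3\le\tau_4$ and split the overlap region into its finitely many interval topologies, the interleaved pairing $(\tau_1,\tau_3),(\tau_2,\tau_4)$ and the nested pairing $(\tau_1,\tau_4),(\tau_2,\tau_3)$. Using the compact support of $R_p$ and the heat-kernel representation, $G_2=\int_{\R^d}\int_{\R^d}R_p(x)R_p(y)\,p(x,y)\,dx\,dy$ with $p$ a convolution of Gaussian densities over the gaps $\delta_k=\tau_{k+1}-\tau_k$; e.g. in the interleaved case, with $q$ the heat kernel,
\[
p(x,y)=\int_{\R^d} q_{\delta_1}(x-w)\,q_{\delta_2}(w)\,q_{\delta_3}(y-w)\,dw .
\]
Changing variables to $(\tau_1,\delta_1,\delta_2,\delta_3)$ and carrying out the time integrations collapses each factor to a truncated Green's function $\int_0^{nt}q_\delta(z)\,d\delta$, which converges to $c_d|z|^{2-d}$ for $d\ge3$ and grows like $\tfrac{1}{2\pi}\log(nt)$ for $d=2$ (when $|z|=O(1)$), the truncation at scale $\sqrt{nt}$ controlling the residual spatial divergences.

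The crucial point is the combinatorial counting: because the two intervals are linked through a shared sub-increment, the overlapping configuration loses, relative to the separated one, at least one of the logarithmic factors present in $d=2$ (and a genuine power of $n$ in $d\ge3$). A careful accounting then yields an overlap contribution that is $o(n^2)$ for $d\ge3$ and $O(n^2\log n)$ for $d=2$, and the identical counting controls the $g\otimes g$ term on the overlap region, where the intersection constraint confines the two interval origins to within $O(|I_1|+|I_2|)$ of one another. In every dimension this is $o(a(n)^4)$, so $\Var(\V_n)\to0$ and the lemma follows from Lemma~\ref{lem:con1stmomentVar}. I expect the main obstacle to be precisely the $d=2$ bookkeeping: there the overlap contribution falls short of the separated order $a(n)^4\sim n^2(\log n)^2$ only by logarithmic factors, so one must track the logarithm produced by each truncated Green's function across the different topologies rather than rely on crude absolute-value bounds.
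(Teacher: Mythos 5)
Your proposal is correct in substance but takes a genuinely different route from the paper's. Both arguments ultimately rest on the same decomposition of $[0,nt]^4$ into the separated, interleaved and nested pairings (the paper's $(i)$, $(ii)$, $(iii)$), and in both cases the real work is showing that the interleaved and nested (``overlap'') configurations are negligible relative to $a(n)^4$. What you do differently is the separated part: the paper computes $\E\{(i)\}\to\frac{t^2}{8\pi^2}\hat R_p(0)^2$ directly --- in $d=2$ via the squeeze/dominated-convergence argument with the auxiliary function $f(c)$, in $d\geq3$ via the $F_n(a,b,t)$ argument --- whereas you observe that on the separated region the variance integrand $G_2-g\otimes g$ vanishes identically, by independence of Brownian increments over disjoint time intervals, so that Lemma \ref{lem:con1stmomentVar} supplies the limiting value for free; the price is the extra estimate on $\int_{\mathrm{overlap}}g\otimes g$, which is comparatively easy and which your interval-origin counting handles correctly. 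This is a real simplification of the paper's hardest dominated-convergence step. For the overlap $G_2$ bounds, the paper works in Fourier variables for $d\geq3$ and with the log-convolution estimates of Lemma \ref{lem:convolLogPotential} for $d=2$, while you propose physical-space truncated Green's functions uniformly in $d$; your claimed orders are right (the nested topology indeed produces the $O(n^2\log n)$ worst case in $d=2$, and the truncation at scale $\sqrt{nt}$ is genuinely needed in $d=3,4$, where the untruncated triple Green's function convolution diverges), but be aware that this step is where essentially all of the paper's technical work lives, and your proposal defers it to ``careful accounting''; it would need to be executed in full for a complete proof. One trade-off worth noting: the paper's Fourier-based $d\geq3$ argument uses only non-negativity of $\hat R_p$ and integrability of $\hat R_p(\xi)|\xi|^{-2}$, which is precisely what allows the same lemma to be recycled in the degenerate case of Section \ref{sec:4}; your absolute-value, compact-support bounds cannot serve that purpose (in the degenerate $d=2$ case one has $a(n)^4=n^2$, and an $O(n^2\log n)$ overlap bound is no longer $o(a(n)^4)$). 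Conversely, your approach needs only that $R_p$ be dominated by a bounded, compactly supported function, which fits the tightness application of Remark \ref{remark:tight} at least as naturally as the paper's argument does.
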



In the proofs, we deal with $d=2$ and $d\geq 3$ in different ways. For the latter, we only use the fact that $\hat{R}_p(\xi)|\xi|^{-2}$ is integrable and so the proof also applies in the degenerate case. Both $R_p(x)$ and $\hat{R}_p(\xi)$ are even functions, a fact that we will use frequently in the proof.

\begin{proof}[Proof of Lemma \ref{lem:con1stmomentVar}]
We first consider the case $d=2$. For fixed $x$, by change of variables $\lambda=\frac{|x|^2}{2u}$, we have
\begin{equation}
\begin{aligned}
\E\{\V_n\}=&\frac{2}{a(n)^2}\int_0^{nt}\int_0^{s}\int_{\R^d} R_p(x)\frac{1}{(2\pi u)^{\frac{d}{2}}}e^{-\frac{|x|^2}{2u}}dxduds\\
=&\frac{n}{a(n)^2}\int_0^{t}\int_{\R^d} \int_{\frac{|x|^2}{2ns}}^\infty  R_p(x)\frac{1}{\pi^{\frac{d}{2}}}\lambda^{\frac{d}{2}-2}e^{-\lambda}\frac{1}{|x|^{d-2}}d\lambda dxds.
\end{aligned}
\end{equation}

Since $a(n)=(n\log n)^{\frac{1}{2}}$, by integrations by parts in $\lambda$, we have
\begin{equation}
\begin{aligned}
\E\{\V_n\}=&\frac{1}{\log n}\int_0^t\int_{\R^d}\frac1\pi R_p(x)\left(e^{-\frac{|x|^2}{2ns}}\log \frac{2ns}{|x|^2}+\int_{\frac{|x|^2}{2ns}}^\infty e^{-\lambda}\log \lambda d\lambda\right)dxds
\to \frac{t}{\pi}\hat{R}_p(0)
\end{aligned}
\label{eq:variance2d}
\end{equation}
by the Lebesgue dominated convergence theorem.

Consider now the case $d\geq 3$. Then, $a(n)=n^{\frac{1}{2}}$ and by Fourier transform, we have
\begin{equation}
\begin{aligned}
\E\{\V_n\}=&\frac{1}{(2\pi)^dn}\int_{[0,nt]^2}\int_{\R^d}\hat{R}_p(\xi)e^{-\frac{1}{2}|\xi|^2|s-u|}d\xi dsdu\\
=&\frac{4}{(2\pi)^d}\int_{\R^d}\frac{\hat{R}_p(\xi)}{|\xi|^2}\int_0^{t}(1-e^{-\frac{1}{2}|\xi|^2ns})dsd\xi\to \frac{4t}{(2\pi)^d}\int_{\R^d}\frac{\hat{R}_p(\xi)}{|\xi|^2}d\xi
\end{aligned}
\end{equation}
as $n\to \infty$.
\end{proof}

\begin{proof}[Proof of Lemma \ref{lem:con2ndmomentVar}]
By symmetry of $R(x)$, we write $$\V_n^2=a(n)^{-4}\int_{[0,nt]^4}R_p(B_{s_1}-B_{s_2})R_p(B_{s_3}-B_{s_4})ds=8((i)+(ii)+(iii)),$$ where
\begin{eqnarray}
(i)=\frac{1}{a(n)^4}\int_{0<s_1<s_2<s_3<s_4<nt}R_p(B_{s_1}-B_{s_2})R_p(B_{s_3}-B_{s_4})ds,\\
(ii)=\frac{1}{a(n)^4}\int_{0<s_1<s_3<s_2<s_4<nt}R_p(B_{s_1}-B_{s_2})R_p(B_{s_3}-B_{s_4})ds,\\
(iii)=\frac{1}{a(n)^4}\int_{0<s_1<s_3<s_4<s_2<nt}R_p(B_{s_1}-B_{s_2})R_p(B_{s_3}-B_{s_4})ds.
\end{eqnarray}
We consider first the case $d=2$.

$(i)$: for fixed $x,y$, by change of variables $u_1=\frac{s_1}{n},u_3=\frac{s_3-s_2}{n}, \lambda_2=\frac{|x|^2}{2(s_2-s_1)}, \lambda_4=\frac{|y|^2}{2(s_4-s_3)}$, we have
\begin{equation*}
\begin{aligned}
\E\{ (i)\}=\frac{n^2}{a(n)^4}\int_{\R_+^4}\int_{\R^{2d}}&\frac{R_p(x)}{|x|^{d-2}}\frac{R_p(y)}{|y|^{d-2}}\frac{1}{4\pi^d}\lambda_2^{\frac{d}{2}-2}e^{-\lambda_2}
\lambda_4^{\frac{d}{2}-2}e^{-\lambda_4}\\
&1_{0\leq u_1+u_3\leq t; \frac{|x|^2}{2\lambda_2}+\frac{|y|^2}{2\lambda_4}\leq n(t-u_1-u_3)}dxdyd\lambda_2d\lambda_4du_1du_3.
\end{aligned}
\end{equation*}

We define
 \begin{equation*}
 f(c)=\frac{1}{(\log n)^2}\int_{\R_+^2}\frac{1}{\lambda_2}e^{-\lambda_2}
\frac{1}{\lambda_4}e^{-\lambda_4}1_{\frac{|x|^2}{2\lambda_2}\leq cn(t-u_1-u_3);\frac{|y|^2}{2\lambda_4}\leq cn(t-u_1-u_3)}d\lambda_2d\lambda_4
\end{equation*}
for $c>0$. Using integrations by parts, $f(c)\to 1$ as $n\to \infty$ as long as $x,y\neq 0, u_1+u_3<t$. Moreover, $f(c)\les (1+|\log c(t-u_1-u_3)|+|\log|x||)(1+|\log c(t-u_1-u_3)|+|\log|y||)$. On the other hand, we note that
\begin{equation*}
f(\frac{1}{2})\leq \frac{1}{(\log n)^2}\int_{\R_+^2}\frac{1}{\lambda_2}e^{-\lambda_2}
\frac{1}{\lambda_4}e^{-\lambda_4}1_{\frac{|x|^2}{2\lambda_2}+\frac{|y|^2}{2\lambda_4}\leq n(t-u_1-u_3)}d\lambda_2d\lambda_4\leq f(1),
\end{equation*}
so by the Lebesgue dominated convergence theorem, we have $\E\{(i)\}\to \frac{t^2}{8\pi^2}\hat{R}_p(0)^2$.

$(ii)$: by a similar change of variables as for $(i)$, we have
\begin{equation*}
\begin{aligned}
\E\{ (ii)\}=\frac{n^2}{a(n)^4}\int_{\R_+^4}\int_{\R^{3d}}&\frac{R_p(x-z)}{|x|^{d-2}}\frac{R_p(y-z)}{|y|^{d-2}}\frac{1}{4\pi^d}\lambda_2^{\frac{d}{2}-2}e^{-\lambda_2}
\lambda_4^{\frac{d}{2}-2}e^{-\lambda_4}\\
&1_{0\leq u_1+u_3\leq t; \frac{|x|^2}{2\lambda_2}+\frac{|y|^2}{2\lambda_4}\leq n(t-u_1-u_3)}q_{nu_3}(z)dxdydzd\lambda_2d\lambda_4du_1du_3.
\end{aligned}
\end{equation*}


By a change of variables and integration by parts in $\lambda_2,\lambda_4$, we have
\begin{equation*}
\begin{aligned}
&|\E\{(ii)\}|\\
 \les& \left(\frac{n}{\log n}\right)^2\int_{[0,t]^2}\int_{\R^6}|R_p(\sqrt{n}(x-z))R_p(\sqrt{n}(y-z))|q_{u_3}(z)\left(e^{-\frac{|x|^2}{2u}}\log \frac{2u}{|x|^2}+\int_{\frac{|x|^2}{2u}}^\infty \log \lambda e^{-\lambda}d\lambda\right)\\
& \left(e^{-\frac{|y|^2}{2u}}\log \frac{2u}{|y|^2}+\int_{\frac{|y|^2}{2u}}^\infty \log \lambda e^{-\lambda}d\lambda\right)dudu_3dxdydz.
\end{aligned}
\end{equation*}
Note that $e^{-\frac{|x|^2}{2u}}\log \frac{2u}{|x|^2}+\int_{\frac{|x|^2}{2u}}^\infty \log \lambda e^{-\lambda}d\lambda\les 1+|\log u|+|\log |x||$. By Lemma \ref{lem:convolLogPotential}, we have
\begin{equation*}
\begin{aligned}
&\frac{n}{\log n}\int_{\R^2}|R_p(\sqrt{n}(x-z))|\left(e^{-\frac{|x|^2}{2u}}\log \frac{2u}{|x|^2}+\int_{\frac{|x|^2}{2u}}^\infty \log \lambda e^{-\lambda}d\lambda\right)dx\\
\les &\frac{1}{\log n}\left(1+|\log u|+|\log |z||+\log n1_{|z|<\frac{2}{\sqrt{n}}}\right).
\end{aligned}
\end{equation*}
The integral in $y$ is controlled in the same way and we obtain
\begin{equation*}
|\E\{(ii)\}|\les \int_{[0,t]^2}\int_{\R^2}\frac{1}{(\log n)^2}\left(1+|\log u|+|\log |z||+\log n1_{|z|<\frac{2}{\sqrt{n}}}\right)^2q_{u_3}(z)dzdudu_3.
\end{equation*}
So $|\E\{(ii)\}|\to 0$ as $n\to \infty$.

$(iii)$: by a similar change of variables as for $(i)$ and by symmetry of $R(x)$, we have
\begin{equation*}
\begin{aligned}
\E\{ (iii)\}=\frac{n^2}{a(n)^4}\int_{\R_+^4}\int_{\R^{3d}}&\frac{R_p(x-y+z)}{|x|^{d-2}}\frac{R_p(y)}{|y|^{d-2}}\frac{1}{4\pi^d}\lambda_2^{\frac{d}{2}-2}e^{-\lambda_2}
\lambda_4^{\frac{d}{2}-2}e^{-\lambda_4}\\
&1_{0\leq u_1+u_3\leq t; \frac{|x|^2}{2\lambda_2}+\frac{|y|^2}{2\lambda_4}\leq n(t-u_1-u_3)}q_{nu_3}(z)dxdydzd\lambda_2d\lambda_4du_1du_3.
\end{aligned}
\end{equation*}


After integrations by parts in $\lambda_2,\lambda_4$, we have
\begin{equation*}
\begin{aligned}
&|\E\{(iii)\}|\\
 \les& \left(\frac{n}{\log n}\right)^2\int_{[0,t]^2}\int_{\R^6}|R_p(\sqrt{n}(x-y+z))R_p(\sqrt{n}y)|q_{u_3}(z)\left(e^{-\frac{|x|^2}{2u}}\log \frac{2u}{|x|^2}+\int_{\frac{|x|^2}{2u}}^\infty \log \lambda e^{-\lambda}d\lambda\right)\\
& \left(e^{-\frac{|y|^2}{2u}}\log \frac{2u}{|y|^2}+\int_{\frac{|y|^2}{2u}}^\infty \log \lambda e^{-\lambda}d\lambda\right)dudu_3dxdydz.
\end{aligned}
\end{equation*}
Note that  $e^{-\frac{|x|^2}{2u}}\log \frac{2u}{|x|^2}+\int_{\frac{|x|^2}{2u}}^\infty \log \lambda e^{-\lambda}d\lambda\les 1+|\log u|+|\log |x||$. By applying Lemma \ref{lem:convolLogPotential} to the integral in $x$, we have
\begin{equation*}
\begin{aligned}
&\frac{n}{\log n}\int_{\R^2}|R_p(\sqrt{n}(x-(y-z)))|\left(e^{-\frac{|x|^2}{2u}}\log \frac{2u}{|x|^2}+\int_{\frac{|x|^2}{2u}}^\infty \log \lambda e^{-\lambda}d\lambda\right)dx\\
\les &\frac{1}{\log n}\left(1+|\log u|+|\log |y-z||+\log n1_{|y-z|<\frac{2}{\sqrt{n}}}\right).
\end{aligned}
\end{equation*}
So
\begin{equation*}
\begin{aligned}
|\E\{(iii)\}| \les
\frac{n}{(\log n)^2}\int_{[0,t]^2}\int_{\R^4}&\left(1+|\log u|+|\log |y-z||+\log n1_{|y-z|<\frac{2}{\sqrt{n}}}\right)|R_p(\sqrt{n}y)|q_{u_3}(z)\\
&(1+|\log u|+|\log |y||)dydzdu_3du.
\end{aligned}
\end{equation*}
Since $|R_p(\sqrt{n}y)|\les 1\wedge |\sqrt{n}y|^{-\alpha}$ for some $\alpha>2$, by Lemma \ref{lem:convolLogPotential}, we know $\E\{(iii)\}\to 0$ as $n\to \infty$.


\medskip

We now consider the case $d\geq 3$.

$(i)$: after Fourier transform and changing of variables $u_i=s_i-s_{i-1}$ for $i=1,2,3,4$ with $s_0=0$, we derive
\begin{equation}
\begin{aligned}
\E\{(i)\}=&\frac{1}{(2\pi)^{2d}n^2}\int_{\R_+^4}\int_{\R^{2d}}1_{\sum_{i=1}^4u_i\leq nt}\hat{R}_p(\xi_1)\hat{R}_p(\xi_2)e^{-\frac{1}{2}|\xi_1|^2u_2}
e^{-\frac{1}{2}|\xi_2|^2u_4}d\xi_1d\xi_2 du\\
=&\frac{1}{(2\pi)^{2d}}\int_{\R_+^2}\int_{\R^{2d}}\hat{R}_p(\xi_1)\hat{R}_p(\xi_2)1_{0\leq u_1+u_3\leq t}F_n(\frac12 |\xi_1|^2,\frac12 |\xi_2|^2,t-u_1-u_3)d\xi_1d\xi_2du,
\end{aligned}
\end{equation}
where $F_n(a,b,t):=\int_{\R_+^2}1_{0\leq s+u\leq nt}e^{-as}e^{-bu}dsdu$ for $a\geq 0, b\geq 0$. It is straightforward to check that $abF_n(a,b,t)$ is uniformly bounded and $F_n(a,b,t)\to \frac{1}{ab}$ as $n\to\infty$. Thus,
\begin{equation}
\begin{aligned}
\E\{(i)\}\to& \frac{1}{(2\pi)^{2d}}\int_{\R_+^2}\int_{\R^{2d}}\hat{R}_p(\xi_1)\hat{R}_p(\xi_2)1_{0\leq u_1+u_3\leq t}\frac{4}{|\xi_1|^2|\xi_2|^2}d\xi_1d\xi_2du_1du_3\\
=&\frac{2t^2}{(2\pi)^{2d}}\left(\int_{\R^d}\frac{\hat{R}_p(\xi)}{|\xi|^2}d\xi\right)^2.
\end{aligned}
\end{equation}

$(ii)$: similarly we have
\begin{equation}
\begin{aligned}
\E\{(ii)\}=&\frac{1}{(2\pi)^{2d}n^2}\int_{\R_+^4}\int_{\R^{2d}}1_{\sum_{i=1}^4u_i\leq nt}\hat{R}_p(\xi_1)\hat{R}_p(\xi_2)
e^{-\frac{1}{2}|\xi_1|^2u_2}e^{-\frac{1}{2}|\xi_1+\xi_2|^2u_3}e^{-\frac{1}{2}|\xi_2|^2u_4}d\xi_1d\xi_2du\\
\les & t\int_{\R^{2d}}\int_0^t\frac{\hat{R}_p(\xi_1)\hat{R}_p(\xi_2)}{|\xi_1|^2|\xi_2|^2}e^{-\frac{1}{2}|\xi_1+\xi_2|^2nu_3}du_3d\xi_1d\xi_2\to 0
\end{aligned}
\end{equation}
as $n\to \infty$.

$(iii)$: by the same change of variables, we obtain
\begin{equation}
\begin{aligned}
\E\{(iii)\}=&\frac{1}{(2\pi)^{2d}n^2}\int_{\R_+^4}\int_{\R^{2d}}1_{\sum_{i=1}^4u_i\leq nt}\hat{R}_p(\xi_1)\hat{R}_p(\xi_2)
e^{-\frac{1}{2}|\xi_1|^2u_2}e^{-\frac{1}{2}|\xi_1+\xi_2|^2u_3}e^{-\frac{1}{2}|\xi_1|^2u_4}d\xi_1d\xi_2du\\
\les & \frac{t}{n}\int_{\R^{2d}}\int_{\R_+^2}1_{u_3+u_4\leq nt}\frac{\hat{R}_p(\xi_1)}{|\xi_1|^2}\hat{R}_p(\xi_2)e^{-\frac{1}{2}|\xi_1+\xi_2|^2u_3}e^{-\frac{1}{2}|\xi_1|^2u_4}du_3du_4
d\xi_1d\xi_2\\
\les &
\frac{t}{n}\int_{\R^{2d}}\int_{\R_+^2}1_{u_3+u_4\leq nt}\frac{\hat{R}_p(\xi_1)\hat{R}_p(\xi_2)}{|\xi_1|^2|\xi_2|^2}(|\xi_1+\xi_2|^2+|\xi_1|^2)
e^{-\frac{1}{2}|\xi_1+\xi_2|^2u_3}e^{-\frac{1}{2}|\xi_1|^2u_4}du_3du_4
d\xi_1d\xi_2\\
\leq &t\int_{\R^{2d}}\int_0^t\int_{\R_+}\frac{\hat{R}_p(\xi_1)\hat{R}_p(\xi_2)}{|\xi_1|^2|\xi_2|^2}|\xi_1+\xi_2|^2
e^{-\frac{1}{2}|\xi_1+\xi_2|^2u_3}e^{-\frac{1}{2}|\xi_1|^2nu_4}du_3du_4
d\xi_1d\xi_2\\
+ &t\int_{\R^{2d}}\int_0^t\int_{\R_+}\frac{\hat{R}_p(\xi_1)\hat{R}_p(\xi_2)}{|\xi_1|^2|\xi_2|^2}|\xi_1|^2
e^{-\frac{1}{2}|\xi_1|^2u_4}e^{-\frac{1}{2}|\xi_1+\xi_2|^2nu_3}du_4du_3
d\xi_1d\xi_2\to 0
\end{aligned}
\end{equation}
as $n\to \infty$.


To summarize, we have shown that $\E\{\V_n^2\}\to \sigma_d^4t^2$. The proof is complete.
\end{proof}

\begin{remark}
The proof of Lemma \ref{lem:con2ndmomentVar} only requires $R(x)$ to be symmetric, bounded, and to satisfy certain integrability condition. In particular, if $R(x)$ is compactly supported, then the result holds. This will be used in the proof of tightness.
\label{remark:tight}
\end{remark}

The following lemma proves that those cross terms appearing in the conditional variance vanish in the limit. When $d\geq 3$, as in the proof of Lemma \ref{lem:con1stmomentVar} and \ref{lem:con2ndmomentVar}, we use the Fourier transform and the integrability of $\hat{R}_p(\xi)|\xi|^{-2}$ so that the proof also applies to the degenerate case.

\begin{lemma}
$\frac{1}{a(n)^2}\int_{nt_{i-1}}^{nt_i}\int_{nt_{j-1}}^{nt_j}R_p(B_s-B_u)dsdu\to 0$ in probability when $i\neq j$.
\label{lem:concrossMoment}
\end{lemma}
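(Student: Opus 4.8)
Write $J_n:=a(n)^{-2}\int_{nt_{i-1}}^{nt_i}\int_{nt_{j-1}}^{nt_j}R_p(B_s-B_u)\,ds\,du$ and assume $i<j$. My plan is to prove $J_n\to0$ in $L^2$, which implies convergence in probability. Conditioning on $B$ has already been performed, so only the unconditional moments of $J_n$ are needed, and these I would evaluate exactly as in Lemmas \ref{lem:con1stmomentVar} and \ref{lem:con2ndmomentVar}, starting from $B_u-B_s\sim N(0,|u-s|I_d)$, i.e. $\E\{R_p(B_s-B_u)\}=(2\pi)^{-d}\int_{\R^d}\hat R_p(\xi)e^{-\frac12|\xi|^2|u-s|}\,d\xi$. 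The one structural feature that separates $J_n$ from the diagonal terms is that the two time windows are disjoint: $s\le nt_i\le nt_{j-1}\le u$, so $u-s\ge n(t_{j-1}-t_i)=:nm\ge0$, and the bulk of the region has $u-s$ of order $n$. I would split the argument according to whether the windows are adjacent ($j=i+1$, $m=0$) or genuinely separated ($j\ge i+2$, $m\ge t_{i+1}-t_i>0$).

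For adjacent windows there is a shortcut that bypasses any new computation. By symmetry of $R_p$, $2J_n$ is the conditional variance of the increment over $[t_{i-1},t_{i+1}]$ minus the conditional variances over $[t_{i-1},t_i]$ and $[t_i,t_{i+1}]$. Each of these three is a copy of $\V_n$ over the corresponding interval, so by Lemmas \ref{lem:con1stmomentVar} and \ref{lem:con2ndmomentVar} and stationarity of Brownian increments each converges in $L^2$ to $\sigma_d^2$ times its length. Since the lengths telescope, $2J_n\to\sigma_d^2\bigl((t_{i+1}-t_{i-1})-(t_i-t_{i-1})-(t_{i+1}-t_i)\bigr)=0$ in $L^2$, and this already covers the degenerate case because the diagonal lemmas do.

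For separated windows I would use the gap $m>0$ directly. In the non-degenerate regime of this section the crude bound $\E\{|J_n|\}\to0$ suffices: since $R_p$ is bounded and compactly supported and $u-s\ge nm$, the heat kernel gives $\int_{\R^d}|R_p(x)|q_{u-s}(x)\,dx\les (nm)^{-d/2}$, and integrating the $n^2$-area region against $a(n)^{-2}$ produces a factor $n^{1-d/2}$ (plus the extra $1/\log n$ when $d=2$), which vanishes. To make the same estimate robust to the degenerate case, where $\int|R_p|$ no longer helps, I would instead keep the signed Fourier representation: $\E\{J_n\}\les g\int_{\R^d}\hat R_p(\xi)|\xi|^{-2}e^{-\frac12|\xi|^2 nm}\,d\xi\to0$ by dominated convergence, using only $\hat R_p(\xi)|\xi|^{-2}\in L^1$, and likewise expand $\E\{J_n^2\}$ over four times $s_1,s_2\in[nt_{i-1},nt_i]$, $u_1,u_2\in[nt_{j-1},nt_j]$ and reproduce the orderings $(i),(ii),(iii)$ of Lemma \ref{lem:con2ndmomentVar}; every ordering must cross the gap, so each factorized contribution carries an $e^{-\frac12|\xi|^2 nm}$ and tends to $0$ by dominated convergence. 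By Remark \ref{remark:tight} these bounds need only symmetry, boundedness and $\hat R_p(\xi)|\xi|^{-2}\in L^1$.

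The main obstacle is not the estimate for any single window pair, which is mild once the gap is present, but the borderline adjacent case $m=0$, where no uniform exponential decay is available and where, in the degenerate setting, one cannot fall back on the non-vanishing quantity $\int|R_p|$ but must exploit genuine cancellation of the signed kernel near $\xi=0$. The dichotomy above is designed precisely to avoid a delicate direct treatment of this case: the variance-difference identity disposes of all adjacent pairs through the already-established diagonal limits, leaving only the separated pairs, for which the positive gap supplies all the decay needed. The remaining care is bookkeeping — tracking, in the four-time second-moment expansion, which consecutive time difference straddles the gap — rather than any genuinely new estimate.
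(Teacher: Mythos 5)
Your proof is correct, but it is structured quite differently from the paper's. The paper never splits into adjacent versus separated windows: for $d=2$ it bounds $\E\{|J_n|\}$ directly via the change of variables $\lambda=|x|^2/(2(s-u))$, observing that the resulting $\lambda$-integral over the cross-window range is of size $1+|\log(t_i-u)|+|\log(t_{i-1}-u)|+|\log|x||$, an $n$-independent and $u$-integrable quantity, so division by $\log n$ plus dominated convergence kills it even when the windows touch; for $d\geq3$ and the degenerate case it bounds $\E\{J_n^2\}$ in Fourier variables, where factors of the form $\int_0^{t}e^{-\frac12|\xi_1+\xi_2|^2nu}du$ tend to $0$ by dominated convergence whether or not the gap $m$ is positive. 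So your dichotomy is not forced; its payoff is the adjacent case, where your polarization identity (twice the cross term equals the conditional variance of the union window minus the two sub-window variances) reduces everything to Lemmas \ref{lem:con1stmomentVar} and \ref{lem:con2ndmomentVar} with no new computation, and this is indeed valid in the degenerate case since those lemmas are. In fact you could have exploited it further: for $a<b\leq c<d$, writing twice the cross term over $[a,b]\times[c,d]$ as $\V^{[a,d]}-\V^{[a,c]}-\V^{[b,d]}+\V^{[b,c]}$ (a difference of four interval variances) handles every disjoint pair at once, which would render your separated-window estimates unnecessary. Two caveats on those estimates as written: the crude heat-kernel bound $\E\{|J_n|\}\les a(n)^{-2}n^2(nm)^{-d/2}$ vanishes only for $d\geq2$, which is harmless because the lemma is invoked only for $d\geq2$ in the non-degenerate setting (the paper's $d=1$ argument uses local times); and in the degenerate second-moment expansion, the orderings whose two outer time increments carry the same frequency (the analogues of $(II)$ in the paper's proof and of $(iii)$ in Lemma \ref{lem:con2ndmomentVar}) naively produce $\hat R_p(\xi_1)|\xi_1|^{-4}$, which need not be integrable, so the redistribution inequality $|\xi_2|^2\les|\xi_1|^2+|\xi_1+\xi_2|^2$ used there is genuinely needed — your instruction to reproduce those orderings does point at the right machinery, but the exponential gap factor alone would not suffice for them.
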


\begin{proof}
Assume $i>j$,


Consider the case $d=2$. For fixed $x,u$, by change of variables $\lambda=\frac{|x|^2}{2(s-u)}$, we have
 \begin{equation*}
 \begin{aligned}
 &\frac{1}{a(n)^2}\int_{nt_{i-1}}^{nt_i}\int_{nt_{j-1}}^{nt_j}\E\{|R_p(B_s-B_u)|\}dsdu\\
=&\frac{n}{a(n)^2}\int_{\R^d}\int_{\R_+^2}1_{(\frac{|x|^2}{2n(t_i-u)},\frac{|x|^2}{2n(t_{i-1}-u)})}(\lambda)1_{(t_{j-1},t_j)}(u)\frac{1}{2\pi^{\frac{d}{2}}}
\frac{|R_p(x)|}{|x|^{d-2}}\lambda^{\frac{d}{2}-2}e^{-\lambda}d\lambda dudx.
\end{aligned}
\end{equation*}
Recalling that  $a(n)=\sqrt{n\log n}$, an integration by parts leads to \begin{equation*}
\int_{\R}1_{(\frac{|x|^2}{2n(t_i-u)},\frac{|x|^2}{2n(t_{i-1}-u)})}(\lambda)\lambda^{-1}e^{-\lambda}d\lambda\les 1+\log n+|\log(t_i-u)|+|\log(t_{i-1}-u)|+|\log |x||,
\end{equation*}
and $\frac{1}{\log n}\int_{\R}1_{(\frac{|x|^2}{2n(t_i-u)},\frac{|x|^2}{2n(t_{i-1}-u)})}(\lambda)\lambda^{-1}e^{-\lambda}d\lambda\to 0$ as $n\to \infty$. We apply the dominated convergence theorem to conclude the proof.

Consider now the case $d\geq 3$ and
 \begin{equation}
(i):=\frac{1}{n^2}\int_{\R_+^4}1_{s_1,s_2\in [nt_{i-1},nt_i]}1_{u_1,u_2\in [nt_{j-1},nt_j]}R_p(B_{s_1}-B_{u_1})R_p(B_{s_2}-B_{u_2})dsdu.
\end{equation}
We show $\E\{(i)\}\to 0$ so the cross term goes to zero in probability.
Actually, we have $(i)=2((I)+(II))$, where
\begin{eqnarray}
(I)=\frac{1}{n^2}\int_{\R_+^4}1_{nt_{j-1}\leq u_2\leq u_1\leq nt_j}1_{\leq nt_{i-1}\leq  s_2\leq s_1\leq nt_i}
R_p(B_{s_1}-B_{u_1})R_p(B_{s_2}-B_{u_2})dsdu,\\
(II)=\frac{1}{n^2}\int_{\R_+^4}1_{nt_{j-1}\leq u_1\leq u_2\leq nt_j} 1_{nt_{i-1}\leq  s_2\leq s_1\leq nt_i}
R_p(B_{s_1}-B_{u_1})R_p(B_{s_2}-B_{u_2})dsdu.
\end{eqnarray}
For $(I)$ we have
\begin{equation}
\begin{aligned}
\E\{(I)\}=\frac{1}{(2\pi)^{2d}n^2}\int_{\R_+^4}\int_{\R^{2d}}&1_{nt_{j-1}\leq u_2\leq u_1\leq nt_j}1_{\leq nt_{i-1}\leq  s_2\leq s_1\leq nt_i}
\hat{R}_p(\xi_1)\hat{R}_p(\xi_2)\\
&e^{-\frac{1}{2}|\xi_1|^2(s_1-s_2)}e^{-\frac{1}{2}|\xi_1+\xi_2|^2(s_2-u_1)}e^{-\frac{1}{2}|\xi_2|^2(u_1-u_2)}d\xi_1d\xi_2dsdu,
\end{aligned}
\end{equation}
which implies $\E\{(I)\}\les (t_j-t_{j-1})\int_{\R^{2d}}\int_{t_{i-1}-t_j}^{t_i-t_{j-1}}\frac{\hat{R}_p(\xi_1)\hat{R}_p(\xi_2)}{|\xi_1|^2|\xi_2|^2}e^{-\frac{1}{2}|\xi_1+\xi_2|^2nu}dud\xi_1d\xi_2\to 0$
as $n\to \infty$. Similarly, for $(II)$ we have
\begin{equation}
\begin{aligned}
\E\{(II)\}=\frac{1}{(2\pi)^{2d}n^2}\int_{\R_+^4}\int_{\R^{2d}}&1_{nt_{j-1}\leq u_1\leq u_2\leq nt_j}1_{\leq nt_{i-1}\leq  s_2\leq s_1\leq nt_i}
\hat{R}_p(\xi_1)\hat{R}_p(\xi_2)\\
&e^{-\frac{1}{2}|\xi_1|^2(s_1-s_2)}e^{-\frac{1}{2}|\xi_1+\xi_2|^2(s_2-u_2)}e^{-\frac{1}{2}|\xi_1|^2(u_2-u_1)}d\xi_1d\xi_2dsdu,\\
\end{aligned}
\end{equation}
so \begin{equation}
\begin{aligned}
\E\{(II)\}\les &\frac{1}{n}\int_{[nt_{j-1},nt_i]^2}\int_{\R^{2d}}
\frac{\hat{R}_p(\xi_1)\hat{R}_p(\xi_2)}{|\xi_1|^2|\xi_2|^2}
(|\xi_1+\xi_2|^2+|\xi_1|^2)e^{-\frac{1}{2}|\xi_1+\xi_2|^2u_1}e^{-\frac{1}{2}|\xi_1|^2u_2}d\xi_1d\xi_2du_1du_2\\
\les & (t_i-t_{j-1})\int_{\R^{2d}}\int_{t_{j-1}}^{t_i}
\frac{\hat{R}_p(\xi_1)\hat{R}_p(\xi_2)}{|\xi_1|^2|\xi_2|^2}e^{-\frac{1}{2}|\xi_1+\xi_2|^2nu}dud\xi_1d\xi_2\\
+&(t_i-t_{j-1})\int_{\R^{2d}}\int_{t_{j-1}}^{t_i}
\frac{\hat{R}_p(\xi_1)\hat{R}_p(\xi_2)}{|\xi_1|^2|\xi_2|^2}e^{-\frac12|\xi_1|^2nu}dud\xi_1d\xi_2\to 0
\end{aligned}
\end{equation}
as $n\to\infty$. 
\end{proof}

The two following propositions show the convergence in probability of $\int_{\R^d} \sum_{k=2}^\infty\frac{1}{k!}(i\theta F_n(y))^kdy$.


\begin{proposition}
$\int_{\R^d} F_n(y)^2dy\to \sum_{i=1}^N\alpha_i^2 \sigma_d^2(t_i-t_{i-1})$ in probability.
\label{prop:conVar23d}
\end{proposition}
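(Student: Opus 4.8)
The plan is to split the double sum defining $\int_{\R^d}F_n(y)^2dy$ into its diagonal ($i=j$) and off-diagonal ($i\neq j$) contributions, dispatch the diagonal blocks with the two variance lemmas and the off-diagonal blocks with the cross-moment lemma, and then reassemble. Recall that
\[
\int_{\R^d}F_n(y)^2dy=\sum_{i,j=1}^N\alpha_i\alpha_j\,\frac{1}{a(n)^2}\int_{nt_{i-1}}^{nt_i}\int_{nt_{j-1}}^{nt_j}R_p(B_s-B_u)\,dsdu,
\]
so the diagonal part is $\sum_{i=1}^N\alpha_i^2\,\V_n^{(i)}$, where $\V_n^{(i)}:=a(n)^{-2}\int_{nt_{i-1}}^{nt_i}\int_{nt_{i-1}}^{nt_i}R_p(B_s-B_u)\,dsdu$.

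First I would treat the diagonal terms. Shifting both time variables by $nt_{i-1}$ and using the independence and stationarity of Brownian increments, the process $(B_{v+nt_{i-1}}-B_{nt_{i-1}})_{v\geq 0}$ is again a standard Brownian motion, and the integrand $R_p(B_s-B_u)$ depends on $B$ only through these increments. Hence $\V_n^{(i)}$ has exactly the same law as $\V_n$ with $t$ replaced by $t_i-t_{i-1}$ (the scaling factor $a(n)$ depends only on $n$, so it is unchanged). Lemmas \ref{lem:con1stmomentVar} and \ref{lem:con2ndmomentVar} therefore apply verbatim and give $\E\{\V_n^{(i)}\}\to\sigma_d^2(t_i-t_{i-1})$ and $\E\{(\V_n^{(i)})^2\}\to\sigma_d^4(t_i-t_{i-1})^2$. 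Consequently $\Var(\V_n^{(i)})\to 0$, so $\V_n^{(i)}\to\sigma_d^2(t_i-t_{i-1})$ in $L^2$ and thus in probability.

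Next I would dispose of the off-diagonal terms: for each pair $i\neq j$, Lemma \ref{lem:concrossMoment} gives $a(n)^{-2}\int_{nt_{i-1}}^{nt_i}\int_{nt_{j-1}}^{nt_j}R_p(B_s-B_u)\,dsdu\to 0$ in probability. Since the sum runs over finitely many such pairs, the weighted off-diagonal sum $\sum_{i\neq j}\alpha_i\alpha_j\,a(n)^{-2}\int_{nt_{i-1}}^{nt_i}\int_{nt_{j-1}}^{nt_j}R_p(B_s-B_u)\,dsdu$ also converges to $0$ in probability.

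Finally, combining the diagonal and off-diagonal pieces and using that a finite linear combination of sequences converging in probability converges to the corresponding linear combination of the limits, I obtain $\int_{\R^d}F_n(y)^2dy\to\sum_{i=1}^N\alpha_i^2\sigma_d^2(t_i-t_{i-1})$ in probability, as claimed. The only delicate point is the reduction of the diagonal blocks $\V_n^{(i)}$ to the exact setting of Lemmas \ref{lem:con1stmomentVar} and \ref{lem:con2ndmomentVar}; this is where the stationarity of the increments of $B$ enters, and it is the step I would write out with the most care, the remainder being a routine assembly of limits already established.
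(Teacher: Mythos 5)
Your proof is correct and follows essentially the same route as the paper: split into diagonal and off-diagonal blocks, apply Lemmas \ref{lem:con1stmomentVar} and \ref{lem:con2ndmomentVar} (via the variance/Chebyshev argument) to the diagonal terms and Lemma \ref{lem:concrossMoment} to the cross terms, then sum the finitely many limits. The only difference is that you spell out two steps the paper leaves implicit, namely the reduction of $\V_n^{(i)}$ to the setting of the lemmas by stationarity of Brownian increments and the passage from moment convergence to convergence in probability.
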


\begin{proposition}
$\int_{\R^d} \sum_{k=3}^\infty \frac{1}{k!}(i\theta F_n(y))^k dy\to 0$ in probability.
\label{prop:conRemain23d}
\end{proposition}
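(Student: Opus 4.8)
The plan is to prove the stronger statement that the nonnegative random variable
$$G_n := \sum_{k=3}^\infty \frac{|\theta|^k}{k!}\int_{\R^d}|F_n(y)|^k\,dy$$
converges to $0$ in $L^1$; since $\left|\int_{\R^d}\sum_{k=3}^\infty\frac1{k!}(i\theta F_n(y))^k\,dy\right|\le G_n$ pointwise, this gives the convergence in probability claimed in the statement. By Tonelli's theorem it suffices to bound $\E\{\int_{\R^d}|F_n(y)|^k\,dy\}$ for each $k\ge 3$ and then sum the resulting series in $k$.

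First I would use $|F_n(y)|\le \max_i|\alpha_i|\,a(n)^{-1}\int_0^{nt}|\phi(B_s-y)|\,ds$ (valid since the intervals $[nt_{i-1},nt_i]$ are disjoint and cover $[0,nt_N]$), expand the $k$-th power into a $k$-fold time integral, and apply Tonelli together with the Markov property of $B_s$ to write $\E\{\prod_{l=1}^k|\phi(B_{s_l}-y)|\}$ as an iterated heat-kernel integral $\int_{\R^{dk}}\prod_l|\phi(x_l-y)|\prod_l q_{u_l}(x_l-x_{l-1})\,dx$ with $x_0=0$ and $u_l=s_l-s_{l-1}$, after ordering the times (which produces a factor $k!$). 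The key step is the spatial integration: because $\phi$ is bounded and supported in some ball $B(0,r_0)$, the factor $\int_{\R^d}\prod_l|\phi(x_l-y)|\,dy$ vanishes unless all the $x_l$ lie within $2r_0$ of $x_1$, and is otherwise bounded by a constant. Integrating the heat kernels out one spatial variable at a time against these localization constraints, and using $\int_{B}q_{u_l}(x_l-x_{l-1})\,dx_l\le\min\big(1,(2\pi u_l)^{-d/2}|B(0,2r_0)|\big)$, collapses the spatial integral to $\les\prod_{l=2}^k\min(1,(2\pi u_l)^{-d/2})$.

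It then remains to integrate in time over $\{u_l>0,\ \sum_l u_l<nt\}$; bounding the constraint region by a product gives a factor $nt$ from $u_1$ and a factor $\int_0^{nt}\min(1,cu^{-d/2})\,du$ from each of $u_2,\dots,u_k$. This last integral is $\les\log n$ when $d=2$ and $\les 1$ when $d\ge3$. Collecting all geometric constants into $C^k$, I obtain $\E\{\int_{\R^d}|F_n(y)|^k\,dy\}\les k!\,C^k a(n)^{-k}\,nt\,(\log n)^{k-1}$ for $d=2$ and $\les k!\,C^k a(n)^{-k}\,nt$ for $d\ge3$. Crucially the $k!$ cancels the $1/k!$ in $G_n$, leaving a geometric series in $k$ with ratio $\rho=C|\theta|\log n/a(n)=C|\theta|(\log n/n)^{1/2}$ when $d=2$ (resp.\ $\rho=C|\theta|/a(n)=C|\theta|n^{-1/2}$ when $d\ge3$). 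Since $\rho\to0$, the series is summable for large $n$ and controlled by a multiple of its $k=3$ term, giving $\E\{G_n\}\les (nt/\log n)\,\rho^3\les t(\log n)^{1/2}n^{-1/2}\to0$ for $d=2$ (resp.\ $\E\{G_n\}\les nt\,\rho^3\les t\,n^{-1/2}\to0$ for $d\ge3$).

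I expect the main obstacle to be the spatial and temporal bookkeeping that produces the sharp logarithmic factor in $d=2$: one must exploit that the compact support of $\phi$ forces all $k$ Brownian positions into a bounded region, and that each time increment $u_l$ contributes at most $\min(1,(2\pi u_l)^{-d/2})$ rather than the full $(2\pi u_l)^{-d/2}$, so that the per-increment time integral is only $\log n$ (not a power of $n$). Getting this estimate right is exactly what makes the $k!$-cancellation produce a convergent geometric series whose ratio tends to $0$ thanks to the scaling $a(n)=(n\log n)^{1/2}$ in $d=2$ and $a(n)=n^{1/2}$ in $d\ge3$.
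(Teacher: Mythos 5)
Your proposal is correct and follows essentially the same route as the paper's proof: bound $|F_n(y)|\les a(n)^{-1}\int_0^{n}|\phi(B_s-y)|\,ds$, expand the $k$-th moment with a time-ordering factor $k!$ that cancels the $1/k!$, exploit the compact support of $\phi$ to localize the spatial integrals, and sum the resulting series, whose effective ratio vanishes thanks to the scaling $a(n)$, yielding $L^1$ (hence in probability) convergence. The only difference is cosmetic: where you bound each heat-kernel integral by $\min\bigl(1,Cu^{-d/2}\bigr)$ and then integrate in time to get $\log n$ (for $d=2$) or $O(1)$ (for $d\geq 3$) per increment, the paper performs the change of variables $\lambda_i=|x_i|^2/(2u_i)$ and integrates by parts, arriving at the same bounds $k!\,C^k(\log n/n)^{k/2-1}$ and $k!\,C^k n^{1-k/2}$.
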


\begin{proof}[Proof of Proposition \ref{prop:conVar23d}]
Note that
\begin{equation*}
\int_{\R^d}F_n(y)^2dy=\sum_{i,j=1}^N \alpha_i\alpha_j\frac{1}{a(n)^2}\int_{nt_{i-1}}^{nt_i}\int_{nt_{j-1}}^{nt_j}R_p(B_s-B_u)dsdu,
\end{equation*}
when $i=j$, Lemma \ref{lem:con1stmomentVar} and \ref{lem:con2ndmomentVar} lead to
 \begin{equation*}
 \frac{1}{a(n)^2}\int_{nt_{i-1}}^{nt_i}\int_{nt_{i-1}}^{nt_i}R_p(B_s-B_u)dsdu\to
 \sigma_d^2(t_i-t_{i-1})
 \end{equation*}
 in probability as $n\to \infty$.

 When $i\neq j$, by Lemma \ref{lem:concrossMoment}, we have
 \begin{equation}
 \frac{1}{a(n)^2}\int_{nt_{i-1}}^{nt_i}\int_{nt_{j-1}}^{nt_j}R_p(B_s-B_u)dsdu\to 0
 \end{equation}
in probability as $n\to \infty$.
 The proof is complete.
 \end{proof}

\begin{proof}[Proof of Proposition \ref{prop:conRemain23d}]
We will use $C$ for possibly different constants in the following estimation. Recall that $F_n(y)=\sum_{i=1}^N\alpha_i \frac{1}{a(n)}\int_{nt_{i-1}}^{nt_i}\phi(B_s-y)ds$, so we have $|F_n(y)|\leq C\frac{1}{a(n)}\int_0^n |\phi(B_s-y)|ds$, and thus
\begin{equation}
\int_{\R^d}\E\{|F_n(y)|^k\}dy \leq C^k \int_{\R^d}\frac{1}{a(n)^k}\int_{[0,n]^k}\E\{\prod_{i=1}^k |\phi(B_{s_i}-y)|\}dsdy.
\label{eq:eqforRHS}
\end{equation}
From now on, we use $RHS$ to denote the $RHS$ of \eqref{eq:eqforRHS}. By change of variables $u_i=s_i-s_{i-1}$ for $i=1,\ldots,k$ with $s_0=0$, and $\lambda_i=\frac{|x_i|^2}{2u_i}$ for $i=2,\ldots,k$ when $x_i$ is fixed, we have
\begin{equation*}
\begin{aligned}
&RHS\\
=&\frac{C^kk!}{a(n)^k}\int_{\R^{(k+1)d}}\int_{\R_+^k}1_{\sum_{i=1}^k u_i\leq n}|\phi|(y)|\phi|(x_2+y)\ldots |\phi|(\sum_{i=2}^kx_i+y)\prod_{i=1}^k q_{u_i}(x_i)dudxdy\\
=&\frac{C^kk!}{a(n)^k}\int_{\R^{kd}}\int_{\R_+^k}1_{u_1+\sum_{i=2}^k \frac{|x_i|^2}{2\lambda_i}\leq n}|\phi|(y)\frac{|\phi|(x_2+y)}{|x_2|^{d-2}}\ldots \frac{|\phi|(\sum_{i=2}^kx_i+y)}{|x_k|^{d-2}}\prod_{i=2}^k \frac{1}{2\pi^{\frac{d}{2}}}\lambda_i^{\frac{d}{2}-2}e^{-\lambda_i}du_1d\lambda dxdy.
\end{aligned}
\end{equation*}
When $d\geq 3$, note that $\int_{\R^d}|\phi|(y+x)|y|^{2-d}dy$ is uniformly bounded in $x$, so after integration in $x_k,\ldots,x_2,y$ and $\lambda_2,\ldots,\lambda_k$, we have $RHS\leq C^kk!n^{-\frac{k}{2}+1}$ where the factor $n$ comes from the integration in $u_1$. This leads to
\begin{equation*}
\E\{|\int_{\R^d} \sum_{k=3}^\infty \frac{1}{k!}(i\theta F_n(y))^k dy|\}\leq \sum_{k=3}^\infty |C\theta|^k\frac{1}{n^{\frac{k}{2}-1}} \to 0
\end{equation*}
as $n\to \infty$.

When $d=2$, we have
\begin{equation}
\begin{aligned}
RHS\leq C^k\frac{nk!}{a(n)^k}\int_{\R^{kd}}\int_{\R_+^{k-1}}|\phi|(y)|\phi|(x_2+y)\ldots |\phi|(\sum_{i=2}^kx_i+y)\prod_{i=2}^k\frac{1}{2\pi}1_{\lambda_i\geq \frac{|x_i|^2}{2n}} \frac{1}{\lambda_i}e^{-\lambda_i}d\lambda dxdy.
\end{aligned}
\label{eq:d2highOrderterms}
\end{equation}
By integration by parts, we have $\frac{1}{(\log n)^{k-1}}\int_{\R_+^{k-1}}\prod_{i=2}^k\frac{1}{2\pi}1_{\lambda_i\geq \frac{|x_i|^2}{2n}} \frac{1}{\lambda_i}e^{-\lambda_i}d\lambda \les \prod_{i=2}^k (1+|\log |x_i||)$. Since $\phi$ is compactly supported, we know that $x_i, i=2,\ldots,k$ are uniformly bounded. After integration in $x_k,\ldots,x_2,y$, we have $RHS\leq C^kk!\left(\frac{\log n}{n}\right)^{\frac{k}{2}-1}$. So
\begin{equation*}
\E\{|\int_{\R^d} \sum_{k=3}^\infty \frac{1}{k!}(i\theta F_n(y))^k dy|\}\leq \sum_{k=3}^\infty |C\theta|^k\left(\frac{\log n}{n}\right)^{\frac{k}{2}-1} \to 0
\end{equation*}
as $n\to \infty$.
The proof is complete.
\end{proof}

\begin{remark}
In \eqref{eq:d2highOrderterms}, if we choose $a(n)=n^{\frac12}$ instead of $a(n)=(n\log n)^{\frac12}$, by the same calculation we still have
\begin{equation}
\E\{|\int_{\R^d} \sum_{k=3}^\infty \frac{1}{k!}(i\theta F_n(y))^k dy|\}\leq \sum_{k=3}^\infty |C\theta|^k\frac{\log n^{k-1}}{n^{\frac{k}{2}-1}} \to 0,
\end{equation}
and this could be used in the proof for the degenerate Poissonian case when $d=2$.
\label{remark:d2PoissonDEGE}
\end{remark}

Recall \eqref{eq:chrPoisson}, by using Propositions \ref{prop:conVar23d} and \ref{prop:conRemain23d} and the Lebesgue dominated convergence theorem, we have proved
\begin{equation}
\begin{aligned}
\E\{ \exp(i\theta Y_N)\}=\E\{ \exp(\int_{\R^d}\sum_{k=2}^\infty \frac{1}{k!}(i\theta F_n(y))^kdy)\}
\to& \E\{\exp(-\frac{1}{2}\theta^2\sum_{i=1}^N\alpha_i^2\sigma_d^2(t_i-t_{i-1}))\}\\
=&\E\{\exp(i\theta\sigma_d\sum_{i=1}^N\alpha_i(W_{t_i}-W_{t_{i-1}}))\}
\end{aligned}
\end{equation}
when $W_t$ is a standard Brownian motion.

\subsubsection{Gaussian case}

When $d=1$, by Proposition \ref{prop:conVar1d}, we have
\begin{equation}
\begin{aligned}
\E\{ \exp(i\theta Y_N)\}=&\E\{ \exp(-\frac{1}{2}\theta^2\sum_{i,j=1}^N\alpha_i\alpha_j\frac{1}{a(n)^2}\int_{nt_{i-1}}^{nt_i}\int_{nt_{j-1}}^{nt_j}R_g(B_s-B_u)dsdu)\}\\
\to& \E\{\exp(-\frac{1}{2}\theta^2\hat{R}_g(0)\sum_{i,j=1}^N \alpha_i\alpha_j\int_{\R}(L_{t_i}(x)-L_{t_{i-1}}(x))(L_{t_j}(x)-L_{t_{j-1}}(x))dx)\}\\
=&\E\{\exp(i\theta\sigma_d\sum_{i=1}^N\alpha_i(Z_{t_i}-Z_{t_{i-1}}))\},
\end{aligned}
\end{equation}
when $Z_t=\int_{\R}L_t(x)W(dx)$.

When $d\geq 2$, by Proposition \ref{prop:conVar23d}, we have
\begin{equation}
\begin{aligned}
\E\{ \exp(i\theta Y_N)\}=&\E\{ \exp(-\frac{1}{2}\theta^2\sum_{i,j=1}^N\alpha_i\alpha_j\frac{1}{a(n)^2}\int_{nt_{i-1}}^{nt_i}\int_{nt_{j-1}}^{nt_j}R_g(B_s-B_u)dsdu)\}\\
\to& \E\{\exp(-\frac{1}{2}\theta^2\sum_{i=1}^N\alpha_i^2\sigma_d^2(t_i-t_{i-1}))\}
=\E\{\exp(i\theta\sigma_d\sum_{i=1}^N\alpha_i(W_{t_i}-W_{t_{i-1}}))\},
\end{aligned}
\end{equation}
when $W_t$ is a standard Brownian motion.

\subsection{Tightness}

\begin{proposition}
$X_n(t)$ is tight in $\C([0,1])$.
\label{prop:tightNonDEGE}
\end{proposition}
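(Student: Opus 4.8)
The plan is to establish tightness via the standard Kolmogorov-Chentsov moment criterion: it suffices to find constants $\gamma, \beta, C>0$ such that
\begin{equation*}
\E\{|X_n(t)-X_n(s)|^\gamma\}\leq C|t-s|^{1+\beta}
\end{equation*}
uniformly in $n$, for all $0\leq s<t\leq 1$. Since the increments of $X_n$ are themselves Brownian functionals in random scenery of the same type (just integrated over $[ns,nt]$ rather than $[0,nt]$), I would work with the fourth moment $\gamma=4$, which is natural here because the conditional-variance computations already control second moments and the fourth moment of $X_n(t)-X_n(s)$ reduces, after conditioning on $B$, to exactly the type of quadruple-integral of $R$ analyzed in Lemma \ref{lem:con2ndmomentVar}.

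First I would reduce to a moment bound on the increment. Conditioning on the Brownian path, in the Gaussian case $X_n(t)-X_n(s)$ is centered Gaussian with conditional variance $\V_{n,s,t}:=a(n)^{-2}\int_{ns}^{nt}\int_{ns}^{nt}R(B_r-B_u)dr\,du$, so $\E\{|X_n(t)-X_n(s)|^4\}=3\,\E\{\V_{n,s,t}^2\}$; in the Poissonian case the fourth moment is bounded by the same conditional-variance-squared quantity plus lower-order cumulant contributions that are controlled analogously. Thus the core task is to show $\E\{\V_{n,s,t}^2\}\leq C|t-s|^2$ uniformly in $n$, which by $\gamma=4$, $\beta=1$ gives the criterion. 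The key point is that $\V_{n,s,t}$ is exactly $\V_n$ with the time interval $[0,nt]$ replaced by $[ns,nt]$, so I would rerun the decomposition $\V_{n,s,t}^2=8((i)+(ii)+(iii))$ from the proof of Lemma \ref{lem:con2ndmomentVar}, now integrating the time variables over $[ns,nt]$ rather than $[0,nt]$.

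The main obstacle is obtaining the correct power $|t-s|^2$ with a constant uniform in $n$, rather than merely convergence for fixed $t$. As flagged in Remark \ref{remark:tight}, the estimates in Lemma \ref{lem:con2ndmomentVar} only use symmetry, boundedness, and integrability of $R$, and crucially the dominating bounds there are integrable in the time variables over the full interval; restricting to $[ns,nt]$ of length $t-s$ produces a factor bounded by $(t-s)^2$ from the two ``free'' time integrations (the variables $u_1,u_3$ in the $d=2$ computation, and the analogous outer time variables after the change $u_i=s_i-s_{i-1}$ in $d\geq 3$), while the remaining integrals are bounded uniformly in $n$ by the same dominated-convergence arguments. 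I would therefore track the time-integration factors carefully to extract $(t-s)^2$, noting that the genuinely diagonal term $(i)$ yields the leading $(t-s)^2$ contribution and the off-diagonal terms $(ii),(iii)$ are even smaller. The $d=1$ case can be handled separately and more directly using the local-time representation, where the increment $\int_{\R}(L_t(x)-L_s(x))W(dx)$ has conditional variance $\hat R(0)\int_{\R}(L_t(x)-L_s(x))^2dx$, and Hölder-continuity estimates for the local time in $t$ give the required moment bound.
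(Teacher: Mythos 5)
Your plan for $d\geq 2$ in the Gaussian case coincides with the paper's proof: the same moment criterion (the paper invokes \cite[Theorem 12.3]{billingsley2009convergence}), the same identity $\E\{|X_n(t)-X_n(s)|^4\}=3\,\E\{\V_{n,s,t}^2\}$ from conditional Gaussianity, and the same reuse of the Lemma \ref{lem:con2ndmomentVar} computation over a horizon of length $n(t-s)$ via Remark \ref{remark:tight}. The genuine gap is in the Poissonian case. Conditionally on $B$, the increment is a compensated Poisson integral, so its conditional fourth moment is $3(\int_{\R^d} F_n^2\,dy)^2+\int_{\R^d} F_n^4\,dy$ with $F_n(y)=a(n)^{-1}\int_{ns}^{nt}\phi(B_r-y)\,dr$, and the fourth-cumulant term you dismiss as ``lower-order'' is \emph{not} bounded by $C(t-s)^{1+\delta}$ uniformly in $n$: running the $k=4$ case of the computation in Proposition \ref{prop:conRemain23d} over the interval of length $n(t-s)$ gives $\E\{\int F_n^4\,dy\}\les (t-s)/n$ for $d\geq3$ (and $(t-s)\log n/n$ for $d=2$), which dominates $(t-s)^2$ whenever $t-s\ll 1/n$, so no choice of $\delta>0$ works without an extra argument in the small-increment regime (for instance, interpolating $\int F_n^4\leq \|F_n\|_\infty^2\int F_n^2$ with the crude bound $\|F_n\|_\infty\les n(t-s)/a(n)$, which happens to cover exactly the regime $t-s\les 1/n$). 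The paper avoids this issue entirely: Lemma \ref{lem:mixing4thmoment} uses the mixing property \eqref{eq:mixingCondition} to bound the \emph{unconditional} fourth moment of $V_p$ by sums of $\varphi^{1/2}(|B_{\tau_1}-B_{\tau_2}|)\varphi^{1/2}(|B_{\tau_3}-B_{\tau_4}|)$, so the Poissonian case reduces to the same $\V_n^2$-type estimate as the Gaussian one, with $\varphi^{1/2}(|x|)$ in place of $R_p(x)$, yielding $C(t-s)^2$ for all $s<t$ at once.

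Your $d=1$ argument has a second, more basic problem: you bound increments of the \emph{limiting} process $\int_{\R}(L_t(x)-L_s(x))W(dx)$, whereas tightness is a statement about the prelimit sequence $X_n$, and moment bounds on the limit say nothing about it. The idea is repairable in the Gaussian case --- the conditional variance of $X_n(t)-X_n(s)$ is $\sqrt{n}\int_{\R^2}R(\sqrt{n}(x-z))(L_t(x)-L_s(x))(L_t(z)-L_s(z))\,dx\,dz$ for the rescaled local time, which is at most $\|R\|_{L^1}\|L_t-L_s\|_{L^2}^2$ by Young's inequality, and local-time moment estimates then give $\E\{|X_n(t)-X_n(s)|^4\}\les(t-s)^3$ --- but the Poissonian cumulant issue recurs there. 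The paper's route for $d=1$ is simpler and works for both potentials: a direct second-moment computation using only that $R$ is bounded and compactly supported gives $\E\{|X_n(t)-X_n(s)|^2\}\leq C(t-s)^{3/2}$, i.e.\ the criterion with $\beta=2$ and $\delta=\tfrac12$, with no fourth moments needed.
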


\begin{proof}
Since $X_n(t)=a(n)^{-1}\int_0^{nt}V(B_s)ds$, then $X_n(0)=0$. To prove tightness of $X_n$ by \cite[Theorem 12.3]{billingsley2009convergence}, we only need to show
\begin{equation}
\E\{|X_n(t)-X_n(s)|^\beta\}\leq C|t-s|^{1+\delta}
\label{eq:komoCon}
\end{equation}
for some constant $\beta,C,\delta>0$.

Consider $d=1$. $\E\{|X_n(t)-X_n(s)|^2\}=n^{-\frac{3}{2}}\int_{[0,n(t-s)]^2}\E\{R(B_{u_1}-B_{u_2})\}du_1du_2$. Since $R$ is bounded and compactly supported, we have
    \begin{equation*}
    \begin{aligned}
    \E\{|X_n(t)-X_n(s)|^2\}\leq& \frac{C}{n^{\frac{3}{2}}}\int_0^{n(t-s)}\int_0^{n(t-s)}\Pb(|B_{u_1}-B_{u_2}|\leq C)du_1du_2\\
    =&C\sqrt{n}\int_0^{t-s}\int_0^{t-s}\Pb(|N|\leq \frac{C}{\sqrt{n|u_1-u_2|}})du_1du_2\\
    =&C\int_0^{t-s}\int_0^{t-s}\int_{\R}1_{|x|<\frac{C}{\sqrt{|u_1-u_2|}}}\frac{1}{\sqrt{2\pi}}e^{-\frac{|x|^2}{2n}}dxdu_1du_2\\
    \leq & C\int_0^{t-s}\int_0^{t-s}\frac{1}{\sqrt{|u_1-u_2|}}du_1du_2
    \leq  C(t-s)^{\frac{3}{2}}.
    \end{aligned}
    \end{equation*}

For the case $d\geq 2$, we calculate the $4-$th moment of $X_n(t)-X_n(s)$.
When $V(x)=V_g(x)$ is Gaussian, we have
    \begin{equation*}
    \E\{|X_n(t)-X_n(s)|^4\}=\frac{1}{a(n)^4}\int_{[0,n(t-s)]^4}\sum_{\{\tau_i\}=\{u_i\}}\E\{R_g(B_{\tau_1}-B_{\tau_2})R_g(B_{\tau_3}-B_{\tau_4})\}du.
    \end{equation*}
    When $V(x)=V_p(x)$ is Poissonian, by Lemma \ref{lem:mixing4thmoment}, we have
\begin{equation*}
\E\{|X_n(t)-X_n(s)|^4\}\leq \frac{C}{a(n)^4}\int_{[0,n(t-s)]^4}\sum_{\{\tau_i\}=\{u_i\}}\E\{\varphi^{\frac{1}{2}}(|B_{\tau_1}-B_{\tau_2}|)\varphi^{\frac{1}{2}}(|B_{\tau_3}-B_{\tau_4}|)\}du.
\end{equation*}
where $\varphi$ is a bounded, compactly supported function. The proof of Lemma \ref{lem:con2ndmomentVar} applies to $\varphi^{\frac12}(|x|)$ replacing $R_p(x)$ in light of Remark \ref{remark:tight}.  Since $\E\{\V_n^2\}\leq Ct^2$, in both cases we have $\E\{|X_n(t)-X_n(s)|^4\}\leq C(t-s)^2$.

In \eqref{eq:komoCon}, when $d=1$, we choose $\beta=2,\delta=\frac{1}{2}$ while when $d\geq 2$, we choose $\beta=4,\delta=1$. The proof is complete.
\end{proof}

\section{Degenerate case when $d=1,2$}
\label{sec:4}

Recall that in the degenerate case $d=1,2$, $X_n(t)=n^{-\frac12}\int_0^{nt}V(B_s)ds$, where $V$ is either Gaussian or Poissonian, and we make the key assumption that $\hat{R}(\xi)|\xi|^{-2}$ is integrable. Our goal is to show that $X_n(t)\Rightarrow \sigma W_t$ in $\C([0,1])$ for standard Brownian motion $W_t$ and $\sigma=\sqrt{4(2\pi)^{-d}\int_{\R^d} \hat{R}(\xi)|\xi|^{-2}d\xi}$.

\subsection{Gaussian case}

To consider the finite dimensional distributions, we define $Y_N=\sum_{i=1}^N\alpha_i(X_n(t_i)-X_n(t_{i-1}))$ for $0=t_0<t_1<\ldots<t_N\leq 1$ and $\alpha_i\in \R, i=1,\ldots,N$, so $Y_N$ has mean zero and conditional variance
\begin{equation}
\E\{Y_N^2|B_s, s\in [0,1]\}=\sum_{i,j=1}^N\alpha_i\alpha_j\frac{1}{n}\int_{nt_{i-1}}^{nt_i}\int_{nt_{j-1}}^{nt_j}R(B_s-B_u)dsdu.
\end{equation}
The convergence of $\E\{Y_N^2|B\} \to \sum_{i=1}^N\alpha_i^2\sigma^2(t_i-t_{i-1})$ in probability is given by the proof of the case $d\geq 3$ in Lemmas \ref{lem:con1stmomentVar}, \ref{lem:con2ndmomentVar} and \ref{lem:concrossMoment}.

The proof of tightness in $\C([0,1])$ is the same as in the case $d\geq 3$ in Proposition \ref{prop:tightNonDEGE} so the proof of Gaussian case is complete.

\subsection{Poissonian case}


If we define $F_n(y)=\sum_{i=1}^N\alpha_i n^{-\frac12}\int_{nt_{i-1}}^{nt_i}\phi(B_s-y)ds$ as in the Gaussian case,  we combine Lemmas \ref{lem:con1stmomentVar}, \ref{lem:con2ndmomentVar} and \ref{lem:concrossMoment} to show that
\begin{equation}
\int_{\R^d}F_n(y)^2dy\to \sum_{i=1}^N\alpha_i^2\sigma^2(t_i-t_{i-1}).
\end{equation}
To prove the convergence of the finite dimensional distributions, it suffices to show \begin{equation}
\int_{\R^d}\sum_{k=3}^\infty\frac{1}{k!}(i\theta F_n(y))^kdy\to 0
\label{eq:highOrderDEGE}
\end{equation}
in probability. However, it turns out that a direct proof of \eqref{eq:highOrderDEGE} also involves a tightness result. Instead, we apply Kipnis-Varadhan's approach involving solving a corrector equation and a martingale approximation. It turns out the results in \cite{kipnis1986central} already contain our special case. We briefly recall their results and prove the required assumption holds in our context.

The following Proposition comes from \cite[Theorem 1.8, Corollary 1.9]{kipnis1986central}.
\begin{proposition}
Let $y(t)$ be a Markov process, reversible with respect to a probability measure $\pi$, and let us suppose that the reversible stationary process $\Pb$ with $\pi$ as invariant measure is ergodic. Let $\V$ be a function on the state space in $L^2(\pi)$ satisfying $\int_\Omega \V d\pi=0$ and the condition $\langle -L^{-1}\V,\V\rangle<\infty$ with $\langle.,.\rangle$ denoting the inner product in $L^2(\pi)$ and $L$ the infinitesimal generator of the process. Let $X(t)=\int_0^t V(y(s))ds$, then $\frac{1}{\sqrt{n}} X(nt)$ satisfies a functional central limit theorem relative to $\Pb$ and the limiting variance $\sigma^2=2\langle -L^{-1}\V,\V\rangle$.
\label{prop:resultKV}
\end{proposition}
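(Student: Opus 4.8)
The statement is the Kipnis--Varadhan central limit theorem, and the plan is to prove it by \emph{martingale approximation} with resolvent regularization. Because $y(t)$ is reversible with respect to $\pi$, its generator $L$ is self-adjoint with $-L\ge0$ on $L^2(\pi)$; the spectral theorem then provides a resolution of the identity $\{E_\lambda\}_{\lambda\ge0}$ for $-L$ and a spectral measure $\mu_\V(d\lambda)=\langle dE_\lambda\V,\V\rangle$. Ergodicity of $\Pb$ forces $\ker(-L)$ to be the constants, and since $\int_\Omega\V\,d\pi=0$ we have $\V\perp\ker(-L)$, so $\mu_\V$ is carried by $(0,\infty)$ and the hypothesis becomes $\langle -L^{-1}\V,\V\rangle=\int_{(0,\infty)}\lambda^{-1}\,\mu_\V(d\lambda)<\infty$. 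This integrability of $\lambda^{-1}$ against $\mu_\V$ is the single quantitative input driving the whole argument.

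First I would regularize the corrector equation $-Lu=\V$, which need not be solvable in $L^2(\pi)$. For $\epsilon>0$ set $u_\epsilon=(\epsilon-L)^{-1}\V\in L^2(\pi)$, so that $-Lu_\epsilon=\V-\epsilon u_\epsilon$ and $\int_\Omega u_\epsilon\,d\pi=0$. Dynkin's formula shows that
\begin{equation*}
N^\epsilon_t:=u_\epsilon(y(t))-u_\epsilon(y(0))-\int_0^t (Lu_\epsilon)(y(s))\,ds
\end{equation*}
is a martingale, and substituting $Lu_\epsilon=\epsilon u_\epsilon-\V$ gives the exact decomposition
\begin{equation*}
X(t)=N^\epsilon_t-\big(u_\epsilon(y(t))-u_\epsilon(y(0))\big)+\epsilon\int_0^t u_\epsilon(y(s))\,ds.
\end{equation*}
Under the stationary law $\Pb$ the martingale $N^\epsilon$ has stationary, ergodic increments, so the ergodic theorem gives $n^{-1}\langle N^\epsilon\rangle_{nt}\to 2t\,\Ener(u_\epsilon,u_\epsilon)$ with $\Ener(f,f)=\langle -Lf,f\rangle$, and the martingale functional CLT yields $n^{-1/2}N^\epsilon_{nt}\Rightarrow\sigma_\epsilon W_t$ with $\sigma_\epsilon^2=2\Ener(u_\epsilon,u_\epsilon)=2\int_{(0,\infty)}\lambda(\epsilon+\lambda)^{-2}\mu_\V(d\lambda)$.

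Next I would pass to the limit $\epsilon\to0$. By monotone convergence $\sigma_\epsilon^2\uparrow 2\int_{(0,\infty)}\lambda^{-1}\mu_\V(d\lambda)=2\langle -L^{-1}\V,\V\rangle=\sigma^2$, which is the claimed variance. In spectral terms the martingales form a Cauchy family,
\begin{equation*}
\E_\pi\big[(N^\epsilon_1-N^{\epsilon'}_1)^2\big]=2\int_{(0,\infty)}\lambda\big((\epsilon+\lambda)^{-1}-(\epsilon'+\lambda)^{-1}\big)^2\mu_\V(d\lambda)\to0
\end{equation*}
as $\epsilon,\epsilon'\to0$, the integrand being dominated by $\lambda^{-1}$; hence $N^\epsilon\to M$ in $L^2$ for a limiting stationary-increment martingale $M$, and $n^{-1/2}M_{nt}\Rightarrow\sigma W_t$.

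The crux, and the step I expect to be the main obstacle, is showing that the two remainder terms are asymptotically negligible after rescaling, which \emph{cannot} be done at fixed $\epsilon$. For the endpoint alone, $n^{-1/2}\big(u_\epsilon(y(nt))-u_\epsilon(y(0))\big)\to0$ in $L^2$ at fixed $\epsilon$, but the drift $n^{-1/2}\epsilon\int_0^{nt}u_\epsilon\,ds$ retains a positive asymptotic variance $\asymp\epsilon^2\int_{(0,\infty)}\lambda^{-1}(\epsilon+\lambda)^{-2}\mu_\V(d\lambda)$ that disappears only as $\epsilon\to0$. The rigorous route is therefore the double-limit estimate
\begin{equation*}
\lim_{\epsilon\to0}\limsup_{n\to\infty}\frac1n\,\E_\pi\Big[\sup_{0\le t\le1}\big(X(nt)-N^\epsilon_{nt}\big)^2\Big]=0,
\end{equation*}
combined with $n^{-1/2}N^\epsilon_{nt}\Rightarrow\sigma_\epsilon W_t$ and $\sigma_\epsilon\to\sigma$. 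Controlling the \emph{supremum} of the boundary term uniformly in $n$ is exactly where reversibility is indispensable: one writes $u_\epsilon(y(t))-u_\epsilon(y(0))$ as the sum of a forward and a time-reversed (backward) martingale increment and applies Doob's maximal inequality to each, bounding the contribution by a multiple of $\Ener(u_\epsilon,u_\epsilon)$ uniformly in $n$. Once this estimate is in place, tightness of $n^{-1/2}X(nt)$ and convergence of its finite-dimensional distributions to those of $\sigma W_t$ follow, completing the functional CLT with $\sigma^2=2\langle -L^{-1}\V,\V\rangle$.
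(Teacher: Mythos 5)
This proposition has no proof in the paper itself: it is imported verbatim from Kipnis--Varadhan (Theorem 1.8 and Corollary 1.9 of the cited reference), the paper's entire argument being that citation. Your sketch correctly reconstructs precisely the argument of that cited source --- resolvent regularization $u_\epsilon=(\epsilon-L)^{-1}\V$, the Dynkin-martingale decomposition, the spectral computation giving $\sigma_\epsilon^2\uparrow\sigma^2$ and the Cauchy property of the martingale family, the double limit in $\epsilon$ and $n$ for the remainders, and the forward--backward martingale decomposition with Doob's maximal inequality (the one place reversibility is essential) to control the supremum of the boundary term --- so it takes essentially the same route as the paper's (cited) proof, with no gaps beyond the level of detail expected of a sketch.
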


In the following, we present a setup of Brownian motion in random scenery borrowed from \cite[Section 9.3]{komorowski2012fluctuations}, to which Proposition \ref{prop:resultKV} can be applied.

Let $(\Omega,\mathcal{F},\pi)$ be a probability space associated with a group of measure-preserving transformations $\{\tau_x,x\in \R^d\}$, i.e, $\pi\circ \tau_x=\pi$ for all $x\in \R^d$. Furthermore, its action is ergodic and stochastically continuous, i.e.,
\begin{enumerate}
\item $\pi\{A\}=0$ or $1$ for any event $A$ such that $\pi\{A \bigtriangleup\tau_x(A)\}=0$ for all $x\in \R^d$ and
\item for any $\delta>0$ and $G$ bounded we have
\begin{equation}
\lim_{h\to 0} \pi\{\omega:|G(\tau_h\omega)-G(\omega)|\geq \delta\}=0.
\end{equation}
\end{enumerate}

The probability space $(\Omega,\mathcal{F},\pi)$ satisfying the above assumption is called random medium.

For any $f\in L^2(\pi)$, let $T_xf(\omega)=f(\tau_x\omega)$. The family $\{T_x, x\in\R^d\}$ forms a $d-$parameter group of unitary operators on $L^2(\pi)$, and stochastic continuity implies that the group is strongly continuous. The generators of the group $\{T_x,x\in \R^d\}$ correspond to differentiation (in $L^2(\pi)$) in the canonical directions $e_k$ and are denoted by $\{D_k,k=1,\ldots,d\}$.

Since $\{T_x,x\in\R^d\}$ is strongly continuous, by spectral theory we have
\begin{equation}
T_x=\int_{\R^d}e^{i\xi x}U(d\xi)
\end{equation}
with  $U(d\xi)$ the associated projection valued measure. Let $L=\frac12\sum_{k=1}^d D_k^2$ and $\V\in L^2(\pi)$ satisfies $\int_\Omega \V d\pi=0$ and $\langle -L^{-1}\V,\V\rangle<\infty$.  By the spectral representation, we have 
\begin{equation}
-L^{-1}=2\int_{\R^d}|\xi|^{-2}U(d\xi).
\end{equation}
 Let $\hat{R}_\V(\xi)$ be the power spectrum associated with $\V$, i.e., \begin{equation}
 \hat{R}_\V(\xi)d\xi=(2\pi)^d\langle U(d\xi)\V,\V\rangle,
 \end{equation} and we obtain that
\begin{equation}
\langle -L^{-1}\V,\V\rangle=\langle 2\int_{\R^d}|\xi|^{-2}U(d\xi)\V,\V\rangle=\frac{2}{(2\pi)^d}\int_{\R^d}\frac{\hat{R}_\V(\xi)}{|\xi|^2}d\xi.
\label{eq:LVV}
\end{equation}
Therefore, the condition that $\langle-L^{-1}\V,\V\rangle<\infty$ is equivalent with the integrability of $\hat{R}_\V(\xi)|\xi|^{-2}$. On the other hand, by defining $V(x)=\V(\tau_{-x}\omega)$, we obtain that
\begin{equation}
R(x)=\E\{V(-x)V(0)\}=\langle T_x\V,\V\rangle=\int_{\R^d}e^{i\xi  x}\langle U(d\xi)\V,\V\rangle=\frac{1}{(2\pi)^d}\int_{\R^d}e^{i\xi  x}\hat{R}_\V(\xi)d\xi,
\end{equation} 
so $\hat{R}(\xi)=\hat{R}_\V(\xi)$.

%
%
%

Now if we consider a stationary ergodic random scenery $V(x)=\V(\tau_{-x}\omega)$, the Brownian motion in random scenery with property scaling is $X_n(t)=\frac{1}{\sqrt{n}}\int_0^{nt}V(B_s)ds=\frac{1}{\sqrt{n}}\int_0^{nt}\V(y_s^\omega)ds$ with 
\begin{equation}
y_s^\omega:=\tau_{-B_s}\omega,
\end{equation}
and we only have to prove the environment process $y_s^\omega$ satisfies the assumptions in Proposition \ref{prop:resultKV} and the Poissonian random potential lives indeed on a random medium. In the following, we denote the probability only with respect to Brownian motion by $\Pb_B$.

\begin{lemma}
$y_s^\omega$ is a stationary, ergodic Markov process taking values in $\Omega$, reversible with respect to the invariant measure $\pi$.
\end{lemma}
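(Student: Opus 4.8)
The plan is to verify the four assertions—Markov, stationary, reversible, ergodic—in turn, using the spectral representation $T_x=\int_{\R^d}e^{i\xi x}U(d\xi)$ as the common computational engine. The key structural fact is that $y_s^\omega=\tau_{-B_s}\omega$ is a functional of the Markov process $B_s$ composed with the group action, so all transition probabilities can be read off from the law of Brownian increments.

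First I would establish the Markov property from the flow (cocycle) identity
\[
y_{s+h}^\omega=\tau_{-B_{s+h}}\omega=\tau_{-(B_{s+h}-B_s)}\tau_{-B_s}\omega=\tau_{-(B_{s+h}-B_s)}\,y_s^\omega .
\]
Since the increment $B_{s+h}-B_s$ is independent of the Brownian filtration $\F_s^B$ while $y_s^\omega$ is $\F_s^B$-measurable, conditioning on $y_s^\omega$ decouples past from future and produces the Markov property with transition semigroup $P_hf(\omega)=\E_B\{f(\tau_{-B_h}\omega)\}=\E_B\{T_{-B_h}f(\omega)\}$, where $\E_B$ is expectation over Brownian motion only.

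Next I would compute $P_h$ spectrally. Substituting $T_{-x}=\int_{\R^d}e^{-i\xi x}U(d\xi)$ and using $\E_B\{e^{-i\xi B_h}\}=e^{-\frac12|\xi|^2h}$ yields
\[
P_h=\int_{\R^d}e^{-\frac12|\xi|^2h}\,U(d\xi).
\]
Because $U(d\xi)$ is a projection-valued measure and the multiplier $e^{-\frac12|\xi|^2h}$ is real, each $P_h$ is self-adjoint on $L^2(\pi)$, which is precisely reversibility with respect to $\pi$; differentiating at $h=0$ recovers the generator $L=-\frac12\int_{\R^d}|\xi|^2U(d\xi)=\frac12\sum_{k=1}^d D_k^2$ used in the text. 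Stationarity follows because each $\tau_x$ preserves $\pi$: by Fubini, $\int_\Omega P_hf\,d\pi=\E_B\int_\Omega f(\tau_{-B_h}\omega)\,d\pi(\omega)=\int_\Omega f\,d\pi$, so $\pi$ is invariant for the semigroup and the process started from $\pi$ is stationary.

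The main obstacle is ergodicity, which I would reduce to a statement about invariant functions: the stationary process is ergodic iff every $f\in L^2(\pi)$ with $P_hf=f$ for all $h>0$ is $\pi$-a.s. constant. From the spectral formula, $P_hf=f$ for all $h$ gives $\int_{\R^d}(1-e^{-\frac12|\xi|^2h})\,d\langle U(\xi)f,f\rangle=0$ with a strictly positive integrand off $\{\xi=0\}$, forcing $f=U(\{0\})f$. But $U(\{0\})$ is the orthogonal projection onto the subspace of functions invariant under the entire translation group $\{T_x,x\in\R^d\}$, and the ergodicity hypothesis on the action $\{\tau_x\}$ (assumption 1 in the definition of random medium) states exactly that such invariant functions are constant. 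Hence $f$ is constant and ergodicity follows. The delicate point is the passage from the one-parameter invariance $P_hf=f$ to full translation invariance $U(\{0\})f=f$; the spectral identity above handles it cleanly, so the remaining work is bookkeeping.
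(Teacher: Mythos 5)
Your proof is correct, and while the Markov step (cocycle identity plus independence of increments) coincides with the paper's, you route the two substantive claims—reversibility and ergodicity—through the spectral representation $T_x=\int_{\R^d}e^{i\xi x}U(d\xi)$, whereas the paper argues directly with the Gaussian transition density. The paper verifies detailed balance, $\int_{A_1}\pi(d\omega)\Pb_B(y_t^\omega\in A_2)=\int_{A_2}\pi(d\omega)\Pb_B(y_t^\omega\in A_1)$, using only measure preservation of $\tau_x$ and the symmetry $q_t(x)=q_t(-x)$; for ergodicity it uses strict positivity $q_s(x)>0$ to upgrade $\Pb_B(y_s^\omega\in A)=1_{\omega\in A}$ to pointwise invariance $1_{\tau_{-x}\omega\in A}=1_{\omega\in A}$ for all $x$, and then invokes ergodicity of the action. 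You instead extract the multiplier formula $P_h=\int_{\R^d}e^{-\frac12|\xi|^2h}U(d\xi)$: reversibility becomes self-adjointness of a real function of a projection-valued measure, and ergodicity follows because the multiplier is strictly less than $1$ off $\xi=0$, forcing any $P_h$-invariant $f$ into the range of $U(\{0\})$, the fixed space of $\{T_x\}$; both proofs then close with the same appeal to ergodicity of $\{\tau_x\}$. The paper's route is more elementary and self-contained—no spectral theorem, only symmetry and positivity of the heat kernel—so it transfers to dynamics with no convenient functional calculus. Your route buys structural dividends: the explicit semigroup identifies the generator $L=\frac12\sum_{k}D_k^2$ (which the paper outsources to \cite[Proposition 9.8]{komorowski2012fluctuations}) and is exactly the formula that later yields $\langle -L^{-1}\V,\V\rangle$ in \eqref{eq:LVV}, so the lemma and the variance computation come out of one calculation. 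Two steps you dismiss as bookkeeping do deserve a line each if written out: the equivalence between ergodicity of the stationary process and triviality of $\{f\in L^2(\pi):P_hf=f\ \forall h>0\}$ (standard for stationary Markov processes, and implicitly what the paper uses as well), and the identification of $U(\{0\})$ with the orthogonal projection onto $\{T_x\}$-invariant vectors (e.g.\ by averaging $\langle T_xf,f\rangle$ over large balls), after which the set form of ergodicity of the action gives constancy via level sets of $f$.
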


\begin{proof}
Since $\tau_x$ is a group of transformations, we have $\tau_{-B_t}\omega=\tau_{-(B_t-B_s)}\tau_{-B_s}\omega$, and by the independence of increments of Brownian motion, $y_s^\omega$ is Markov.

Now we show $y_s^\omega$ is reversible with respect to $\pi$ by proving
\begin{equation}
\int_{A_1}\pi(d\omega)\Pb_B(y_t^\omega\in A_2)=\int_{ A_2}\pi(d\omega)\Pb_B(y_t^\omega\in A_1)
\label{eq:reversible}
\end{equation}
for any $A_1,A_2\in \mathcal{F}$. Actually, we have \begin{eqnarray}
\int_{A_1}\pi(d\omega)\Pb_B(y_t^\omega\in A_2)&=&
\int_{\Omega}\int_{\R^d}1_{\omega\in A_1}1_{\tau_{-x} \omega\in A_2}q_t(x)dx\pi(d\omega),\\
\int_{ A_2}\pi(d\omega)\Pb_B(y_t^\omega\in A_1)&=&
\int_{\Omega}\int_{\R^d}1_{\omega\in A_2}1_{\tau_{-x} \omega \in A_1}q_t(x)dx\pi(d\omega).
\end{eqnarray}
Using measure-preserving property of $\tau_x$ and the fact that $q_t(x)=q_t(-x)$, \eqref{eq:reversible} is proved.

Since $y_s^\omega$ is reversible with respect to $\pi$, $\pi$ is an invariant measure. Furthermore, $y_s^\omega$ starts from its invariant measure, so it is stationary.



For ergodicity, we only need to show that if $A\in \mathcal{F}$ such that $\Pb_B(y_s^\omega\in A)=1_{\omega\in A}$ for all $s\geq 0$, then $\pi(A)=0$ or $1$. Actually, $\Pb_B(y_s^\omega\in A)=\int_{\R^d}1_{\tau_{-x}\omega\in A}q_s(x)dx=1_{\omega\in A}$ implies $1_{\tau_{-x} \omega\in A}=1_{\omega\in A}$ for all $x\in \R^d$, since $q_s(x)>0,\forall x\in \R^d$. By the ergodicity of $\tau_x$, we have $\pi(A)=0$ or $1$.
\end{proof}

The infinitesimal generator of $y_s^\omega$ is given by $L$. For detailed proof, we refer to \cite[Proposition 9.8]{komorowski2012fluctuations}.

Next, we show that the Poissonian potential fits the framework of random medium.

Let $\omega=\omega(dy)$ be a Poissonian field in $\R^d$ with Lebesgue measure $dy$ as its intensity, we can write it as
\begin{equation}
\omega(dy)=\sum_i \delta_{\xi_i}(dy)
\end{equation}
where $\delta_z(dy)$ is the Dirac delta measure at $z$, $\{\xi_i\}$ is the Poisson point process with $\pi$ being its law. If $\omega(A)$ denotes the number of points in $\{\xi_i\}$ that fall inside $A$, we have $\pi(\omega(A)=n)=e^{-|A|}|A|^n(n!)^{-1}$ with $|A|$ the Lebesgue measure of $A$. 

The group of transformation $\{\tau_x,x\in \R^d\}$ acts on $\omega=\omega(dy)\in \Omega$ as
\begin{equation}
(\tau_x\omega)(dy)=\sum_i \delta_{x+\xi_i}(dy),
\end{equation}
and we have the following standard result:
\begin{lemma}
$\{\tau_x,x\in\R^d\}$ is measure-preserving, ergodic and stochastically continuous in the following sense:
\begin{enumerate}
\item $\pi\circ \tau_x=\pi$ for all $x\in \R^d$, where $\pi\circ\tau_x$ is the law of Poisson point process $\{\xi_i+x\}$.
\item any $f\in L^2(\pi)$ that satisfies $f(\tau_x\omega)=f(\omega)$ for all $x\in \R^d$ has to be a contant.
\item for any $\delta>0$ and $G$ bounded, we have $\lim_{h\to 0}\pi(\omega:|G(\tau_h \omega)-G(\omega)|\geq \delta)=0$.
\end{enumerate}
\end{lemma}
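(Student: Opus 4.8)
The plan is to verify the three properties separately, relying throughout on the two defining features of the Poissonian field: the mutual independence of $\omega$ restricted to disjoint Borel sets, and the translation invariance of its intensity, namely Lebesgue measure. For measure preservation (property 1) I would first record that for any bounded Borel $A$ one has $(\tau_x\omega)(A)=\omega(A-x)$, so that under $\tau_x$ the vector of counts $(\omega(A_1),\dots,\omega(A_k))$ is replaced by $(\omega(A_1-x),\dots,\omega(A_k-x))$. Since the $A_j-x$ remain disjoint when the $A_j$ are, and $|A_j-x|=|A_j|$, this new vector consists of independent Poisson variables with exactly the same means $|A_j|$. Because the law of a point process is determined by the joint distributions of counts over finite families of bounded Borel sets, this gives $\pi\circ\tau_x=\pi$; equivalently one checks that the Laplace functional $\E\{\exp(-\int\phi\,d(\tau_x\omega))\}$ does not depend on $x$.

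Ergodicity (property 2) I expect to be the main point. The strategy is to prove the stronger \emph{mixing} statement
\[
\lim_{|x|\to\infty}\E\{f\,(T_x g)\}=\E\{f\}\,\E\{g\},\qquad T_xg(\omega):=g(\tau_x\omega),
\]
for all $f,g\in L^2(\pi)$, and then deduce ergodicity: if $f$ is invariant, $T_xf=f$ a.s. for every $x$, whence $\E\{f^2\}=\E\{f\,T_xf\}$ for all $x$; letting $|x|\to\infty$ and invoking mixing gives $\E\{f^2\}=(\E\{f\})^2$, i.e. $\Var f=0$, so $f$ is constant. To establish mixing I would first treat \emph{local} observables: if $f,g$ are measurable with respect to the configuration inside a fixed ball $B_K$, then $T_xg$ is measurable with respect to the configuration inside $B_K-x$, which is disjoint from $B_K$ once $|x|$ is large; independence over disjoint sets then yields $\E\{f\,T_xg\}=\E\{f\}\E\{g\}$ \emph{exactly} for all large $|x|$. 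The general case follows by approximation: local functions are dense in $L^2(\pi)$ (the $\sigma$-algebra $\F$ is generated by $\bigcup_K\F_{B_K}$, so $\E\{\,\cdot\mid\F_{B_K}\}\to\,\cdot$ in $L^2$ by martingale convergence), and since each $T_x$ is an isometry the errors from replacing $f,g$ by local approximants are uniformly small in $x$.

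Stochastic continuity (property 3) I would again prove on a dense class and extend. For $G(\omega)=F(\int\phi_1\,d\omega,\dots,\int\phi_m\,d\omega)$ with $F$ bounded continuous and $\phi_j\in C_c(\R^d)$, I would use $\int\phi_j\,d(\tau_h\omega)=\int\phi_j(h+y)\,\omega(dy)$ together with the uniform continuity of the compactly supported $\phi_j$: for $|h|\le 1$ all translates $\phi_j(h+\cdot)$ are supported in a common compact $K'$, and $|\int\phi_j(h+\cdot)\,d\omega-\int\phi_j\,d\omega|\le\|\phi_j(h+\cdot)-\phi_j\|_\infty\,\omega(K')\to 0$ almost surely because $\omega(K')<\infty$ a.s. Continuity of $F$ then gives $G(\tau_h\omega)\to G(\omega)$ a.s., hence in probability. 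Such $G$ are dense in $L^2(\pi)$, and isometry of $T_h$ gives $\|T_hG-G\|_{L^2}\le 2\|G-G_0\|_{L^2}+\|T_hG_0-G_0\|_{L^2}$ for a close local approximant $G_0$, whence $\|T_hG-G\|_{L^2}\to 0$ for every $G\in L^2(\pi)$; for bounded $G$ this $L^2$-convergence implies the stated convergence in probability.

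The main obstacle is the ergodicity claim: disjoint-support independence yields mixing only for local functionals, so the substance of the argument is the $L^2$-density of local functions combined with the isometry bookkeeping that upgrades exact mixing of the approximants to asymptotic mixing of arbitrary $L^2$ observables. By contrast, properties 1 and 3 are comparatively routine, each reducing to translation invariance of Lebesgue measure (respectively uniform continuity of the test functions) on a generating or dense class.
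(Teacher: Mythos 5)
Your proposal is correct and follows essentially the same route as the paper: measure preservation via the distributional invariance of Poisson counts (equivalently the Laplace functional), ergodicity deduced from mixing, which in turn comes from independence of the configuration over disjoint sets, and stochastic continuity by reduction to cylinder functions $\mathcal{G}(\int\phi_1\,d\omega,\ldots,\int\phi_N\,d\omega)$ of integrals against compactly supported test functions. The only difference is one of detail, not of method: where the paper disposes of ergodicity in one line (``mixing property implies ergodicity'') and of stochastic continuity by a direct $L^2(\pi)$ variance computation $\E\{(\int\phi_i\,d(\tau_h\omega)-\int\phi_i\,d\omega)^2\}=\int(\phi_i(x+h)-\phi_i(x))^2dx\to 0$, you spell out the local-approximation and isometry bookkeeping that these one-liners implicitly rely on.
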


\begin{proof}
For the measure-preserving property, since the Laplace functional characterize the Poisson point process, for any positive smooth test function of compact support $f$, we consider
\begin{equation}
\begin{aligned}
\E\{e^{-\int_{\R^d}f(y)(\tau_x\omega)(dy)}\}=\exp(\int_{\R^d}(e^{-f(x+y)}-1)dy)=&\exp(\int_{\R^d}(e^{-f(y)}-1)dy)\\
=&\E\{e^{-\int_{\R^d}f(y)\omega(dy)}\},
\end{aligned}
\end{equation}
so $\pi\circ \tau_x=\pi$.

For ergodicity, if $A\cap B=\emptyset$, $\omega(A)$ and $\omega(B)$ are independent, so mixing property implies ergodicity. 

 
For stochastic continuity, by approximation, we can assume that $$G(\omega)=\mathcal{G}(\int_{\R^d}\phi_1(x)\omega(dx),\ldots,\int_{\R^d}\phi_N(x)\omega(dx))$$ for some test functions $\G,\phi_i$, hence we only need to show that $\int_{\R^d}\phi_i(x)(\tau_h\omega)(dx)\to \int_{\R^d}\phi_i(x)\omega(dx)$ in $L^2(\pi)$, and this comes from the fact that
\begin{equation}
\E\{\left(\int_{\R^d}\phi_i(x)(\tau_h\omega)(dx)-\int_{\R^d}\phi_i(x)\omega(dx)\right)^2\}=\int_{\R^d}(\phi_i(x+h)-\phi_i(x))^2dx\to 0
\end{equation}
as $h\to 0$.
\end{proof}

To summarize, we could apply Proposition \ref{prop:resultKV} to $\V(\omega)=\int_{\R^d}\phi(-y)\omega(dy)$, which leads to $V(x)=\V(\tau_{-x}\omega)=\int_{\R^d}\phi(x-y)\omega(dy)$. We only need to recall \eqref{eq:LVV} that \begin{equation}
\sigma^2=2\langle -L^{-1}\V,\V\rangle=\frac{4}{(2\pi)^d}\int_{\R^d}\frac{\hat{R}_\V(\xi)}{|\xi|^{2}}d\xi=\frac{4}{(2\pi)^d}\int_{\R^d}\frac{\hat{R}(\xi)}{|\xi|^{2}}d\xi
\end{equation}
to complete the proof.

\begin{remark}
We point out that by martingale approximation, the results obtained in \cite{kipnis1986central} is stronger than annealed convergence. It is weak convergence in measure, see \cite[Remark (1.10)]{kipnis1986central}.
\end{remark}

\begin{remark}
When $d=2$, by Remark \ref{remark:d2PoissonDEGE}, we could derive  \eqref{eq:highOrderDEGE} and prove the convergence of the finite dimensional distributions in the Poissonian case by the method of characteristic functions. When $d=1$, the estimation turns out to be more involved and our method does not lead to \eqref{eq:highOrderDEGE}. In both cases, the proof of tightness as in Proposition \ref{prop:tightNonDEGE} fails to hold. To use the same fourth moment method, for technical reasons we need the more restrictive condition that $|\hat{\phi}(\xi)||\xi|^{-2}=\sqrt{\hat{R}(\xi)}|\xi|^{-2}$ is integrable.
\end{remark}

\section{Conclusions and discussions}
\label{sec:5}
In this paper, we have proved an invariance principle for Brownian motion in a Gaussian or Poissonian scenery in all dimension. The result is consistent with the discrete case \cite{kesten1979limit, bolthausen1989central} and other types of potentials in the continuous case \cite{remillard1991limit, kipnis1986central}. Our main contribution is the non-degenerate case $d=2$, where a logarithm scaling shows up and the functional central limit theorem for martingale can not be applied as in \cite{kipnis1986central}. It is natural to expect the invariance principle to hold as long as the random scenery is sufficiently mixing, e.g. in our case, the covariance function $R(x)$ is compactly supported. In the non-degenerate case, when $d=1$, the limit is of the form $\int_{\R}L_t(x)W(dx)$ for Brownian local time $L_t(x)$ and independent white noise $W(dx)$, and when $d\geq 2$, the limit is Brownian motion. However, as observed in \cite{gu2013weak}, when the random scenery is long-range correlated, such convergence does not hold and depending on the tail of covariance function, we need to choose different scaling factors.

In the degenerate case, i.e., $\hat{R}(0)=0$ with $\hat{R}(\xi)|\xi|^{-2}$ integrable when $d=1,2$, we have derived the limits with scaling factor $n^{-\frac12}$. The results are essentially the same as in $d\geq 3$ and all directly come from \cite{kipnis1986central}, since under the general assumption of stationarity and ergodicity, the only requirement for their result to hold is the finiteness of asymptotic variance, i.e., integrability of $\hat{R}(\xi)|\xi|^{-2}$. Brownian motion turns out to be the limit for $d=1$ as well.

\section*{Acknowledgment}  We would like to thank the anonymous referee for pointing out several possible improvements in the original manuscript. This paper was partially funded by AFOSR Grant NSSEFF- FA9550-10-1-0194 and NSF grant DMS-1108608.

\appendix
\section{Technical lemmas}

\begin{lemma}
Assume $d=2, \alpha>2, c>0$. Then we have the following inequalities
\begin{eqnarray}
\int_{\R^2}\left( 1\wedge \frac{1}{|\sqrt{n}y|^{\alpha}}\right)|\log |y||dy &\les& \frac1n +\frac1n \log n,\\
\int_{\R^2}
\left( 1\wedge \frac{1}{|\sqrt{n}(x-y)|^{\alpha}}\right)|\log|y||dy &\les&
\frac{1}{n}+\frac{1}{n}|\log|x||+\frac{1}{n}\log n 1_{|x|<\frac{2}{\sqrt{n}}},\\
\int_{|x-y|<\frac{c}{\sqrt{n}}}|\log|y||dy&\les& \frac1n+\frac1n |\log|x||+\frac1n \log n 1_{|x|<\frac{2c}{\sqrt{n}}},\\
\int_{|x-y|<\frac{c}{\sqrt{n}}}\left( 1\wedge \frac{1}{|\sqrt{n}y|^{\alpha}}\right) |\log |y||dy
&\les& \frac1n+\frac1n|\log |x||+\frac1n \log n1_{|x|<\frac{2c}{\sqrt{n}}},
\end{eqnarray}
\begin{equation}
\begin{aligned}
&\int_{\R^2} |\log  |x-y||\left( 1\wedge \frac{1}{|\sqrt{n}y|^{\alpha}}\right)|\log |y||dy\\
\les &\frac{1}{n}+\frac{1}{n}\log n+\frac{1}{n}|\log |x||+\frac{1}{n}\log n|\log |x||+\frac{1}{n} (\log |x|)^2+\frac{1}{n}(\log n)^2 1_{|x|<\frac{1}{\sqrt{n}}}.
\end{aligned}
\end{equation}
\label{lem:convolLogPotential}
\end{lemma}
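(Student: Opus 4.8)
The plan is to reduce all five inequalities to a single mechanism. In each case the weight $1\wedge|\sqrt n y|^{-\alpha}$ (or the indicator $1_{|x-y|<c/\sqrt n}$) has total mass of order $1/n$ and concentrates its variable at the scale $1/\sqrt n$; the only delicate feature is the logarithmic factor, which contributes a bounded multiple of $1+|\log|x||$ when $|x|\gtrsim 1/\sqrt n$ and an extra factor $\log n$ inside the boundary layer $|x|\lesssim 1/\sqrt n$. The hypothesis $\alpha>2=d$ enters decisively through the convergent constant $\int_{\R^2}(1\wedge|z|^{-\alpha})(1+|\log|z||)^2\,dz<\infty$ and the scaling identity $n^{-\alpha/2}\int_{|w|>1/\sqrt n}|w|^{-\alpha}\,dw\les 1/n$; both fail at $\alpha=2$. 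Throughout I would use the rescaling $z=\sqrt n y$, under which $|\log|y||\le|\log|z||+\tfrac12\log n$ and $dy=n^{-1}dz$.

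First I would dispatch the two elementary building blocks. For the first inequality the rescaling gives directly
\[
\int_{\R^2}\Big(1\wedge\tfrac{1}{|\sqrt n y|^{\alpha}}\Big)|\log|y||\,dy\le \frac1n\int_{\R^2}(1\wedge|z|^{-\alpha})|\log|z||\,dz+\frac{\log n}{2n}\int_{\R^2}(1\wedge|z|^{-\alpha})\,dz,
\]
and both integrals are finite because $\alpha>2$, yielding $\les n^{-1}+n^{-1}\log n$. For the third inequality I would split on the size of $|x|$: when $|x|\ge 2c/\sqrt n$ the disk $\{|x-y|<c/\sqrt n\}$ stays in the annulus $|x|/2\le|y|\le 2|x|$, so $|\log|y||\les 1+|\log|x||$ and the area $\sim 1/n$ gives $\les n^{-1}+n^{-1}|\log|x||$; when $|x|<2c/\sqrt n$ the disk sits inside $\{|y|<3c/\sqrt n\}$ and a radial computation $\int_0^{R}r|\log r|\,dr$ with $R\sim 1/\sqrt n$ produces the $n^{-1}\log n$ contribution carrying the indicator. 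The fourth inequality is then immediate: bounding $1\wedge|\sqrt n y|^{-\alpha}\le 1$ reduces its left-hand side to that of the third inequality.

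For the second inequality I would substitute $w=x-y$, turning the left-hand side into $\int_{\R^2}(1\wedge|\sqrt n w|^{-\alpha})|\log|x-w||\,dw$, and split at $|w|=1/\sqrt n$. On the near part $|w|<1/\sqrt n$ the weight is $\le 1$ and the integral is controlled by the third inequality (with $c=1$). On the far part the weight equals $n^{-\alpha/2}|w|^{-\alpha}$; splitting further at $|w|=|x|/2$, the inner piece has $|\log|x-w||\les 1+|\log|x||$ and the scaling identity $n^{-\alpha/2}\int_{|w|>1/\sqrt n}|w|^{-\alpha}\,dw\les 1/n$ gives $\les n^{-1}(1+|\log|x||)$, while the outer piece has $|w|$ bounded below by a multiple of $\max\{|x|,1/\sqrt n\}$ and only a locally integrable log singularity at $w=x$, contributing the same order.

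The fifth inequality is the genuine obstacle, being a double-logarithm convolution: it is $\int|\log|x-y||\,d\mu_n(y)$ against the measure $\mu_n(dy)=(1\wedge|\sqrt n y|^{-\alpha})|\log|y||\,dy$ of total mass $\les n^{-1}(1+\log n)$. I would decompose the $y$-integral into the three regions $|y|<|x|/2$, $|x|/2\le|y|\le 2|x|$, and $|y|>2|x|$, together with the dichotomy $|x|<1/\sqrt n$ versus $|x|\ge 1/\sqrt n$. On $\{|y|<|x|/2\}$ one has $|\log|x-y||\les 1+|\log|x||$, so the first inequality bounds this by $n^{-1}(1+\log n)(1+|\log|x||)$, producing the four linear-in-log terms. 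On the annulus $\{|x|/2\le|y|\le 2|x|\}$ both $|\log|y||\les 1+|\log|x||$ and the weight $\sim n^{-\alpha/2}|x|^{-\alpha}$ factor out, and the remaining $\int|\log|x-y||\,dy\les|x|^2(1+|\log|x||)$ over the annulus yields, after using $n^{-\alpha/2}|x|^{2-\alpha}\le n^{-1}$ for $|x|\ge 1/\sqrt n$, the term $n^{-1}(\log|x|)^2$. On $\{|y|>2|x|\}$ one has $|\log|x-y||\les 1+|\log|y||$, reducing to the rescaling of $\int(1\wedge|\sqrt n y|^{-\alpha})(1+|\log|y||)^2\,dy$. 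The main subtlety, and the source of the indicator-weighted $(\log n)^2$ term, is precisely the regime $|x|<1/\sqrt n$: there the near-origin set $|y|\lesssim 1/\sqrt n$ has weight $\approx 1$ and both logarithms of size $\sim\log n$ over an area $\sim 1/n$, so the product integrates to $n^{-1}(\log n)^2$, whereas for $|x|\ge 1/\sqrt n$ the lower cutoff $2|x|$ replaces $\log n$ by $\log|x|$ and no $(\log n)^2$ survives. Careful bookkeeping of these simultaneously-large-logarithm regions, and verification that every term lands inside the stated right-hand side, is where essentially all the work lies.
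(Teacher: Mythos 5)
Your proposal is correct and follows essentially the same route as the paper: both arguments rest on decomposing $\R^2$ into regions according to proximity to the two singular points $0$ and $x$ and to the scale $1/\sqrt{n}$, with the hypothesis $\alpha>2$ entering through the convergence of the rescaled tail integrals. The differences are purely organizational --- you prove the third inequality first and feed it into the second, and you spell out the fifth inequality (correctly locating the source of the $(\log n)^2\,1_{|x|<1/\sqrt{n}}$ term), whereas the paper proves only the first and second in detail and dismisses the rest as ``proved in a similar way.''
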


\begin{proof}
For the first inequality, we have
\begin{equation*}
\int_{\R^2}\left( 1\wedge \frac{1}{|\sqrt{n}y|^{\alpha}}\right)|\log |y||dy
=
-\int_0^{\frac{1}{\sqrt{n}}}r\log rdr+\frac{1}{n^{\frac{\alpha}{2}}}\int_{\frac{1}{\sqrt{n}}}^\infty \frac{1}{r^{\alpha-1}}|\log r|dr,
\end{equation*}
and by integrations by parts, we have $-\int_0^{\frac{1}{\sqrt{n}}}r\log rdr\les \frac{1}{n}\log n$ and $
\frac{1}{n^{\frac{\alpha}{2}}}\int_{\frac{1}{\sqrt{n}}}^\infty \frac{1}{r^{\alpha-1}}|\log r|dr\les \frac{1}{n}(1+\log n)$.

For the other inequalities, they are all in the form of convolutions and are proved in a similar way. We only present the proof for the second one. We have
\begin{equation*}
\int_{\R^2}
\left( 1\wedge \frac{1}{|\sqrt{n}(x-y)|^{\alpha}}\right)|\log|y||dy=(I)+(II),
\end{equation*}
where
\begin{eqnarray*}
(I)&=&\int_{|x-y|<\frac{1}{\sqrt{n}}}|\log |y||dy,\\
(II)&=& \frac{1}{n^{\frac{\alpha}{2}}}\int_{|x-y|>\frac{1}{\sqrt{n}}}\frac{1}{|x-y|^{\alpha}}|\log |y||dy.
\end{eqnarray*}

Let $\rho=|x|$, and define $B(x,r)=\{y:|x-y|\leq r\}$, $(i)=\{y: |y|<|y-x|\}, (ii)=\{y:|y|\geq |y-x|\}$. We divide $\R^d$ into three disjoint parts, $A_1=B(0,\rho)\bigcap (i)$, $A_2=B(x,\rho)\bigcap (ii)$, and $A_3=\R^d\backslash (A_1\bigcup A_2)$.

For $(I)$, when $y\in A_1$, $|y-x|\geq \frac{\rho}{2}$, so $\rho\leq\frac{2}{\sqrt{n}}$ and $\int_{A_1}|\log |y||dy\leq \int_0^\rho|\log r|rdr=\rho^2(\frac{1}{4}-\frac{1}{2}\log \rho)$, so we have $\int_{A_1}|\log |y||dy\les \frac{1}{n}(1+\log n)1_{\rho\leq\frac{2}{\sqrt{n}}}$. When $y\in A_2$, $2\rho\geq|y|\geq\frac{\rho}{2}$, so $|\log |y||\les 1+|\log \rho|$, thus $\int_{A_2}|\log |y||dy\les \frac{1}{n}(1+|\log \rho|)$. When $y\in A_3$, $\rho\leq |y|\leq 2|y-x|\leq \frac{2}{\sqrt{n}}$, so $\int_{A_3}|\log |y||dy\leq \int_\rho^{\frac{2}{\sqrt{n}}}r|\log r|dr\les \frac{1}{n}(1+\log n1_{\rho\leq \frac{2}{\sqrt{n}}})$. Therefore, we have shown that \begin{equation*}
(I)\les \frac{1}{n}(1+|\log \rho|+\log n1_{\rho\leq \frac{2}{\sqrt{n}}}).
\end{equation*}

For $(II)$, by a similar discussion, when $y\in A_1$, $|x-y|\geq \frac{\rho}{2}$, so if $\rho>1$, $\frac{1}{n^\frac{\alpha}{2}}\int_{A_1}\frac{1}{|x-y|^{\alpha}}|\log |y||dy\les \frac{1}{n^{\frac{\alpha}{2}}\rho^\alpha}\int_0^\rho r|\log r|dr\les \frac{1}{n}$, else if $\rho\in (\frac{2}{\sqrt{n}},1]$, we have $\frac{1}{n^\frac{\alpha}{2}}\int_{A_1}\frac{1}{|x-y|^{\alpha}}|\log |y||dy\les \frac{1}{n}(1+|\log \rho|)$, and for the last case $\rho\leq \frac{2}{\sqrt{n}}$, we have $\frac{1}{n^\frac{\alpha}{2}}\int_{A_1}\frac{1}{|x-y|^{\alpha}}|\log |y||dy\les \int_{|y|<\rho}|\log |y||dy\les \frac{1}{n}(1+\log n1_{\rho\leq \frac{2}{\sqrt{n}}})$. When $y\in A_2$, $|\log |y||\les 1+|\log \rho|$, so $\frac{1}{n^\frac{\alpha}{2}}\int_{A_2}
\frac{1}{|x-y|^{\alpha}}|\log |y||dy\les \frac{1}{n}(1+|\log \rho|)$. When $y\in A_3$, $|\log |y||\les 1+|\log |x-y||$, so we only need to estimate $\frac{1}{n^\frac{\alpha}{2}}\int_{\rho\vee \frac{1}{\sqrt{n}}}^\infty\frac{1}{r^{\alpha-1}}(1+|\log r|)dr$. Following the same discussion as in $A_1$, and considering the different cases $\rho>1$, $1\geq \rho>\frac{1}{\sqrt{n}}$ and $\frac{1}{\sqrt{n}}\geq \rho$, we can show $\frac{1}{n^\frac{\alpha}{2}}\int_{\rho\vee \frac{1}{\sqrt{n}}}^\infty\frac{1}{r^{\alpha-1}}(1+|\log r|)dr\les \frac{1}{n}(1+|\log \rho|+\log n1_{\rho\leq \frac{1}{\sqrt{n}}})$. Therefore, we have obtained that
\begin{equation*}
(II)\les \frac{1}{n}(1+|\log \rho|+\log n1_{\rho\leq \frac{2}{\sqrt{n}}}).
\end{equation*}
The proof is complete.
\end{proof}

\begin{lemma}
Let $V(x)$ be a mean zero stationary random field with $\E\{V(x)^6\}<\infty$ satisfying the mixing property \eqref{eq:mixingCondition} with positive, non-increasing mixing coefficient $\varphi$. Then we have
\begin{equation}
|\E\{V(x_1)V(x_2)V(x_3)V(x_4)\}|\leq C\sum_{\{y_k\}=\{x_k\}}\varphi^{\frac{1}{2}}(|y_1-y_2|)\varphi^{\frac{1}{2}}(|y_3-y_4|)\E\{V(x)^6\}^{\frac{2}{3}}.
\label{eq:4thmomentEstimate}
\end{equation}
Thus, \eqref{eq:4thmomentEstimate} holds for the Poissonian potential $V(x)=\int_{\R^d}\phi(x-y)\omega(dy)-c_p$ when $\phi$ is continuous and compactly supported, and the mixing coefficient $\varphi$ could be chosen as some continuous, compactly supported function as well.
\label{lem:mixing4thmoment}
\end{lemma}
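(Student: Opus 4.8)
The plan is to reduce \eqref{eq:4thmomentEstimate} to a single inequality for fixed $x_1,x_2,x_3,x_4$ and to read off the correct pairing from the geometry of these four points. Since \eqref{eq:mixingCondition} is a decorrelation statement across a spatial gap, the first task is to locate a good gap. I would do this with the minimum spanning tree (MST) of the four points: deleting its longest edge, of length $\ell$, splits $\{x_1,\dots,x_4\}$ into two nonempty groups $A$ and $B$. By the cut property of the MST, $\ell$ is the shortest edge crossing this partition, so $d(A,B)=\ell$; and since every remaining MST edge has length $\le\ell$ and each group spans at most two edges, all within-group distances are at most $2\ell$. This is the fact that drives the estimate: the separation $\ell$ controls the internal distances up to the factor $2$ that is precisely the slack built into \eqref{eq:mixingCondition} through the argument $2d(A,B)$.

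With $\eta=\prod_{x_i\in A}V(x_i)$ and $\zeta=\prod_{x_j\in B}V(x_j)$, which are $\F_A$- and $\F_B$-measurable, I would write $\E\{\eta\zeta\}=\E\{\eta\}\E\{\zeta\}+\mathrm{Cov}(\eta,\zeta)$ and use $|\mathrm{Cov}(\eta,\zeta)|\le\varphi(2\ell)\sqrt{\Var\eta\,\Var\zeta}$. If the partition is of type $1$--$3$, say $A=\{x_a\}$, then $\E\{\eta\}=\E\{V(x_a)\}=0$ by mean zero, so only the covariance survives; Hölder gives $\Var\eta\le\E\{V^2\}$ and $\Var\zeta\le\E\{V(x_b)^2V(x_c)^2V(x_d)^2\}\le\E\{V^6\}$ (three-factor Hölder, which is exactly where finiteness of the sixth moment is needed so that $\zeta$ is square integrable), whence the bound $\varphi(2\ell)\E\{V^6\}^{2/3}$. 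Pairing $x_a$ with its MST-neighbour $x_b\in B$ (distance $\ell$) and the remaining $x_c,x_d$ (distance $\le2\ell$), and using that $\varphi\le1$ is decreasing, $\varphi^{1/2}(|x_a-x_b|)\varphi^{1/2}(|x_c-x_d|)\ge\varphi^{1/2}(\ell)\varphi^{1/2}(2\ell)\ge\varphi(2\ell)$ dominates it. If the partition is of type $2$--$2$, the product term is $R(\cdot)R(\cdot)$; since $|R(x)|=|\mathrm{Cov}(V(x),V(0))|\le\varphi(2|x|)\E\{V^2\}$ it already has the shape $\varphi^{1/2}\varphi^{1/2}$ of the matched pair, while the covariance term is again dominated by the within-group matching because both internal distances are $\le\ell$. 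In every case I land on one summand of \eqref{eq:4thmomentEstimate}, and the moment factors collapse to $\E\{V^6\}^{2/3}$ by Jensen, since $\E\{V^2\}^2,\E\{V^4\}\le\E\{V^6\}^{2/3}$.

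For the Poissonian potential two points remain. First, $\E\{V_p^6\}<\infty$: $V_p$ is a centered Poisson integral of the bounded, compactly supported kernel $\phi$, whose moments are finite polynomials in the integrals $\int\phi^j\,dx$, all finite since $\phi\in L^j$ for every $j$. Second, the mixing coefficient can be taken continuous and compactly supported: $V_p(x)$ is a functional of $\omega$ restricted to $x-\mathrm{supp}\,\phi$, so once $A$ and $B$ are separated by more than $\mathrm{diam}(\mathrm{supp}\,\phi)$ the $\sigma$-algebras $\F_A,\F_B$ are generated by the Poisson field on disjoint regions and are therefore independent, giving $\mathrm{Cor}=0$; combined with $|\mathrm{Cor}|\le1$ this lets us dominate by a positive, decreasing, compactly supported $\varphi$, as already noted after \eqref{eq:mixingCondition}.

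The hard part is controlling the genuinely four-point (cumulant-like) covariance term by a single pairing product. A naive bound by $\varphi$ at the separation fails unless that separation dominates the internal distances, and the configuration of three clustered points with one far point shows that \emph{no} $2$--$2$ pairing can furnish a large enough gap. It is therefore essential both to permit $1$--$3$ partitions, where mean zero kills the product term, and to have the factor-$2$ slack in \eqref{eq:mixingCondition}; arranging the MST partition so that $d(A,B)$ is simultaneously a matched distance and an upper control (up to $2$) for all internal distances is the step that requires the most care.
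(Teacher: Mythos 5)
Your proof is correct, but it follows a genuinely different route from the paper's. The paper never forms a $2$--$2$ partition: it picks the \emph{diameter} pair $y_1,y_2$ (ordered so that $y_1$ is the one nearer to the remaining points), isolates each of $y_1$ and $y_2$ in turn as a singleton against the other three points, and applies \eqref{eq:mixingCondition} twice; since the isolated factor has mean zero, the product-of-means term vanishes both times, yielding the two bounds $\mathcal{E}\les\varphi(|y_1-y_3|)\E\{V^6\}^{2/3}$ and $\mathcal{E}\les\varphi(|y_2-y_4|)\E\{V^6\}^{2/3}$ (the latter via the triangle-inequality step $|y_2-y_4|\leq 2|y_2-y_3|$ when $y_3$ happens to be the point closest to $y_2$). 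The product $\varphi^{1/2}\varphi^{1/2}$ is then manufactured by taking the \emph{geometric mean of the two bounds}. You instead split along the longest edge of the minimum spanning tree, apply the mixing inequality once per configuration, and convert the single factor $\varphi(2\ell)$ into a matched product using monotonicity together with your key geometric fact that all within-group distances are at most $2\ell$; the price is that the $2$--$2$ split can actually occur, forcing you to bound the surviving product term $R(x_a-x_b)R(x_c-x_d)$ separately (by applying \eqref{eq:mixingCondition} pairwise) and to invoke the normalization $\varphi\leq 1$ --- which is harmless, since correlations are bounded by $1$, so one may replace $\varphi$ by $\varphi\wedge 1$ and the conclusion for the smaller coefficient implies it for $\varphi$, but you should say this explicitly. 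Both arguments need the factor-$2$ slack in \eqref{eq:mixingCondition} at exactly the analogous step, both use the same H\"older/Jensen reductions to $\E\{V^6\}^{2/3}$, and both finish by exhibiting a single dominating summand of \eqref{eq:4thmomentEstimate}; what the paper's geometric-mean trick buys is that mean-zero always kills the product term, while what your MST partition buys is a cleaner, more systematic identification of the decorrelation gap and only one application of the mixing inequality for the cumulant-type term. Your treatment of the Poissonian addendum (finiteness of the sixth moment from the cumulant formula for Poisson integrals, and exact independence beyond $\mathrm{diam}(\mathrm{supp}\,\phi)$ giving a compactly supported $\varphi$) matches what the paper asserts without proof in Section~\ref{sec:2}.
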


\begin{proof}
Let $y_1$ and $y_2$ be two points in $\{x_k\}_{1\leq k\leq 4}$ such that $d(y_1,y_2)\geq d(x_i,x_j)$ for all $1\leq i,j\leq 4$ and such that $d(y_1,\{y_3,y_4\})\leq d(y_2,\{y_3,y_4\})$, where $\{y_k\}_{1\leq k\leq 4}=\{x_k\}_{1\leq k\leq 4}$. We assume $d(y_3,y_1)\leq d(y_4,y_1)$. Therefore by \eqref{eq:mixingCondition}, we have
\begin{equation*}
\mathcal{E}:=|\E\{V(x_1)V(x_2)V(x_3)V(x_4)\}|\les \varphi(2|y_1-y_3|)(\E\{V(y_1)^2\})^{\frac{1}{2}}(\E\{(V(y_2)V(y_3)V(y_4))^2\})^{\frac{1}{2}}.
\end{equation*}
The last two terms are bounded by $\E\{V(x)^6\}^{\frac{1}{6}}$ and $\E\{V(x)^6\}^{\frac{1}{2}}$ respectively. Because $\varphi(r)$ is decaying in $(0,\infty)$, we have $\mathcal{E}\les \varphi(|y_1-y_3|)\E\{V(x)^6\}^{\frac{2}{3}}$.  On the other hand, if $y_4$ is (one of) the closest point(s) to $y_2$, the same argument shows that $\mathcal{E}\les \varphi(|y_2-y_4|)\E\{V(x)^6\}^{\frac{2}{3}}$. Otherwise, $y_3$ is the closest point to $y_2$, and we find $\mathcal{E}\les \varphi(2|y_2-y_3|)\E\{V(x)^6\}^{\frac{2}{3}}$. However, by construction, we have
\begin{equation*}
|y_2-y_4|\leq |y_1-y_2|\leq |y_1-y_3|+|y_2-y_3|\leq 2|y_2-y_3|,
\end{equation*}
so we still have $\mathcal{E}\les \varphi(|y_2-y_4|)\E\{V(x)^6\}^{\frac{2}{3}}$. To summarize, we have
\begin{equation*}
\mathcal{E}\les \varphi^{\frac{1}{2}}(|y_1-y_3|)\varphi^{\frac{1}{2}}(|y_2-y_4|)\E\{V(x)^{6}\}^{\frac{2}{3}},
\end{equation*}
and this completes the proof.
\end{proof}


\end{document}